\documentclass[12pt]{amsart} \usepackage{amssymb} \usepackage{mathabx} \usepackage{bm}


\newcommand{\wrlab}[1]{\label{#1}}

\newtheorem{thm}{THEOREM}[section]

\newtheorem{lem}[thm]{LEMMA} 
\newtheorem{cor}[thm]{COROLLARY} 
 
 \newtheorem{thm*}{THEOREM}[]

\newcommand{\secref}[1]{\S\ref{#1}}

\newcommand{\tref}[1]{Theorem~\ref{#1}}
\newcommand{\cref}[1]{Corollary~\ref{#1}}

\newcommand{\lref}[1]{Lemma~\ref{#1}}



\def\N{{\mathbb N}}  \def\Q{{\mathbb Q}}
\def\R{{\mathbb R}}

  
\def\scrd{{\mathcal D}} \def\scrf{{\mathcal F}} 
 \def\scrk{{\mathcal K}} 
 \def\scrp{{\mathcal P}} \def\scrq{{\mathcal Q}}
\def\scrr{{\mathcal R}}  
\def\scru{{\mathcal U}}  
  
 \def\scrh{{\mathcal H}} \def\scrc{{\mathcal C}}

\def\bfz{{\bf0}}


\font\tenolde=eufm10 at 10pt
\font\sevenolde=eufm7
\font\fiveolde=eufm5
\newfam\oldefam
\textfont\oldefam=\tenolde
\scriptfont\oldefam=\sevenolde
\scriptscriptfont\oldefam=\fiveolde

 \def\Lc{{\mathfrak c}} \def\Lg{{\mathfrak g}}

\def\Ls{{\mathfrak s}}





\def\id{{\rm id}}
\def\dim{\hbox{dim\,}}

\def\det{\hbox{det\,}}
\def\exp{\hbox{exp}}

\def\Stab{\hbox{Stab}}
\def\Fix{\hbox{Fix}}

\def\Tay{\hbox{Tay}}
\def\lin{\hbox{lin}}

\def\dom{\hbox{dom}}
\def\im{\hbox{im}}


\begin{document}
\bibliographystyle{alpha}


\title[Local freeness for $C^\infty$ actions]
{Local freeness in frame bundle prolongations of $C^\infty$ actions}

\keywords{prolongation, moving frame, dynamics}
\subjclass{57Sxx, 58A05, 58A20, 53A55}

\author{Scot Adams}
\address{School of Mathematics\\ University of Minnesota\\Minneapolis, MN 55455
\\ adams@math.umn.edu}

\date{May 24, 2017\qquad Printout date: \today}

\begin{abstract}
We prove a local freeness result for $C^\infty$ actions.
\end{abstract}

\maketitle

 

\section{Introduction\wrlab{sect-intro}}

Let a real Lie group have a $C^\infty$ action on a $C^\infty$ real manifold $M$.
Let~$G^\circ$ be the identity component of~$G$.
Assume the fixpoint set of~any nontrivial element of~$G^\circ$ has empty interior in~$M$.
Let $n:=\dim G$.
Assume $n\ge1$.
Let $F$ be the frame bundle of~$M$ of order $n-1$.
We prove (\tref{thm-high-stabs}): there exists a $G$-invariant dense open subset~$Q$ of~$F$
such that the $G$-action on $Q$ has discrete stabilizers.

\section{Global notation, conventions and observations\wrlab{sect-global}}

Let $\N:=\{1,2,3,\ldots\}$ and $\N_0:=\N\cup\{0\}$.
For any set~$S$, the {\bf identity function on $S$}, denoted $\id_S:S\to S$, is defined by $\id_S(s)=s$.

Let $f$ be a function.
We will denote the domain of $f$ by $\dom[f]$
and the image of $f$ by $\im[f]:=f(\dom[f])$.
For any set $T$, we will denote the $f$-preimage of $T$ by
$f^*(T):=\{x\in\dom[f]\,|\,f(x)\in T\}$.
For any~$S\subseteq\dom[f]$,
define $f|S:S\to\im[f]$ by $(f|S)(s)=f(s)$.
For any function $g$, define
$g\circ f:f^*(\dom[g])\to\im[g]$
by $(g\circ f)(x)=g(f(x))$.

Throughout this paper, by ``manifold'', we mean
``Hausdorff, second countable, finite dimensional $C^\infty$  real manifold without boundary'',
unless otherwise specified.
By ``vector space'', we mean ``real vector space'', unless otherwise specified.
By ``group'', we mean ``multiplicative group'', unless otherwise specified.
By ``Lie group'', we mean ``real Lie group'', unless otherwise specified.
By ``action'', we mean ``left action'', unless otherwise specified.
Throughout this paper, 
every finite dimensional vector space is given, without comment,
its standard topology and manifold structure.
A nonempty open subset of~a topological space acquires, without comment,
its relative topology, inherited from the ambient topological space.

For any group $G$, the identity element of $G$ is $1_G$.
For any~vector space~$V$, the zero element of~$V$ is $0_V$.
For all $n\in\N$, let $0_n:=0_{\R^n}$;
then $0_n=(0,\ldots,0)\in\R^n$.
For any set $S$, for any vector space $V$,
the zero map $\bfz_{S,V}:S\to V$ is defined by $\bfz_{S,V}(s)=0_V$.
For any topological space $X$,
by ``$\forall^\circ x\in X$'', we mean
``there exists a dense open subset of~$X$ such that, for all $x$ in that subset''.

Let $M$ be a manifold. Then the tangent bundle of $M$ is denoted~$TM$.
Also, for any~$x\in M$, the tangent space at~$x$ of $M$ is denoted $T_xM$.

Let $M$ and~$N$ be manifolds.
Let $f:M\to N$ be $C^\infty$.
Then the differential of~$f$ is denoted $df:TM\to TN$.
Also, for any $x\in M$, the differential at~$x$ of $f$
is denoted $(df)_x:=(df)|(T_xM):T_xM\to T_{f(x)}N$.
We say that {\bf$f$ has constant rank} if:
there exists $r\in\N_0$ such that, for all $x\in M$,
$\dim(\im[(df)_x])=r$.

Let $M$ be a manifold.
Let $d:=\dim M$.
Let $L\subseteq M$.
For any integer $p\in[0,d]$,
we say $L$ is a {\bf locally closed $p$-submanifold of~$M$}
if, for all~$x\in L$, there are a neighborhood $U$ in $M$ of $x$
and a~$C^\infty$~diffeomorphism $\phi:U\to\R^d$
such that $\phi(L\cap U)$ is a $p$-dimensional subspace of~$\R^d$.
We say $L$ is a {\bf locally closed submanifold of $M$}
if there is an integer $p\in[0,d]$ such that $L$ is
a locally closed $p$-submanifold of~$M$.

Any open subset of a manifold $M$ is a locally closed submanifold of~$M$.
Any closed subgroup of a Lie group~$G$ is a locally closed submanifold of $G$.
Any affine subspace of a finite dimensional vector space~$V$
is a locally closed submanifold of $V$.

Let $L$ be a locally closed submanifold of a manifold $M$.
Then $L$~acquires, without comment,
the unique manifold structure for which the inclusion map $\iota:L\to M$ is an immersion.
The image of $d\iota:TL\to TM$ is a locally closed subset of~$TM$, and will be denoted $TL$.
There is a slight technical difference between this image, denoted $TL$,
and the tangent bundle of $L$, also denoted $TL$.
We ask the careful reader to determine which $TL$ is meant from context;
they are typically identified.
For any $x\in L$, the image of $(d\iota)_x:T_xL\to T_xM$ is denoted $T_xL$.
Again, it is common to identify the two vector spaces $T_xL$.
If $L$ is open in $M$, then, for all $x\in L$,
we have $T_xL=T_xM$.

Let $G$ be a group acting on a set $X$.
For any $x\in X$, we denote the stabilizer in $G$ of $x$
by $\Stab_G(x):=\{g\in G\,|\,gx=x\}$.
For any $g\in G$, we denote the fixpoint set in $X$ of $g$
by  $\Fix_X(g):=\{x\in X\,|\,gx=x\}$.
The $G$-action on $X$ is {\bf effective} if,
for all~$g\in G\backslash\{1_G\}$,
we have $\Fix_X(g)\ne X$.
The $G$-action on $X$ is {\bf free} if,
for all $g\in G\backslash\{1_G\}$,
we have $\Fix_X(g)=\emptyset$.
The $G$-action on $X$ is effective iff
$\displaystyle{\bigcap_{x\in X}\,\Stab_G(x)=\{1_G\}}$.
The $G$-action on $X$ is free iff,
for all $x\in X$, we have $\Stab_G(x)=\{1_G\}$.

Let $G$ be a topological group acting on a set $X$.
The $G$-action on $X$ is {\bf locally free} if,
for all $x\in X$, we have: $\Stab_G(x)$ is discrete in $G$.

Let $G$ be a group acting on a topological space $X$.
We will say that the action is {\bf fixpoint rare} if,
for all $g\in G\backslash\{1_G\}$,
the interior in~$X$ of~$\Fix_X(g)$ is empty.
Fixpoint rare implies: effective on all nonempty invariant open sets.
A partial converse of~this is \lref{lem-fixpt-rare-lcgp}.

Let $G$ be a group acting on a manifold $M$.
Assume: for all $g\in G$, $x\mapsto gx:M\to M$ is $C^\infty$.
The $G$-action on $M$ induces a $G$-action on~$TM$.
For all $x\in M$, let
$\Stab'_G(x):=\{g\in G\,|\,\forall v\in T_xM, gv=v\}$.

Let $G$ be a Lie group acting on a manifold $M$.
Assume that the $G$-action on $M$ is $C^\infty$,
{\it i.e.}, that $(g,x)\mapsto gx:G\times M\to M$ is $C^\infty$.
Then both of the following functions are upper semi-continuous:
\begin{eqnarray*}
x\,\,\mapsto\,\,\dim(\Stab_G(x))&:&M\,\,\to\,\,\N_0\\
\hbox{and}\qquad x\,\,\mapsto\,\,\dim(\Stab'_G(x))&:&M\,\,\to\,\,\N_0.
\end{eqnarray*}

Let $M$ and $N$ be manifolds.
Let $\scru$ be the set of open subsets of $M$.
We define $\displaystyle{C_O^\infty(M,N):=\bigcup_{U\in\,\scru}\,C^\infty(U,N)}$.
We let $D_O^\infty(M,N)$ denote the set of all
maps $f\in C_O^\infty(M,N)$ such that:
${\bm(}\,\im[f]\hbox{ is open in }N\,{\bm)}$
and ${\bm(}\,f:\dom[f]\to\im[f]\hbox{ is a }C^\infty\hbox{ diffeomorphism}\,{\bm)}$.

Let $M$ be a manifold, $d:=\dim M$.
We will let $\scrc_M:=D_O^\infty(M,\R^d)$ denote the set of charts on $M$,
and will let $\scrr_M:=D_O^\infty(\R^d,M)$ denote the set of reverse charts on~$M$.
Let $\scrr_M^0:=\{\lambda\in\scrr_M\,|\,0_d\in\dom[\lambda]\}$.

Let $d\in\N$.
Let $\alpha_1,\ldots,\alpha_d\in\N_0$.
Let $\alpha:=(\alpha_1,\ldots,\alpha_d)\in\N_0^d$.
We set $|\alpha|:=\alpha_1+\cdots+\alpha_d$
and $\alpha!:=(\alpha_1!)\cdots(\alpha_d!)$.
For all $x_1,\ldots,x_d\in\R$,
for~$x:=(x_1,\ldots,x_d)\in\R^d$,
we set $x^\alpha:=x_1^{\alpha_1}\cdots x_d^{\alpha_d}\in\R$.
Let $E_1,\ldots,E_d$ be the standard framing of $\R^d$.
For all $\delta\in\{1,\ldots,d\}$,
for any finite dimensional vector space $V$,
the vector field $E_\delta$
corresponds to a differential operator $\partial_\delta:C_O^\infty(\R^d,V)\to C_O^\infty(\R^d,V)$.
For any finite dimensional vector space $V$,
we define $\partial^\alpha:=\partial_1^{\alpha_1}\circ\cdots\circ\partial_d^{\alpha_d}:
C_O^\infty(\R^d,V)\to C_O^\infty(\R^d,V)$.

Let $d\in\N$ and let $D:=\{1,\ldots,d\}$.
We define a partial ordering $\le$ on~$\N_0^d$ by:
{\bf(} $(\rho_1,\ldots,\rho_d)\le(\sigma_1,\ldots,\sigma_d)$ {\bf)} iff {\bf(} $\forall i\in D,\,\rho_i\le\sigma_i$ {\bf)}.
For all~$\rho,\sigma\in\N_0^d$,
if $\rho\le\sigma$, then we define
$\displaystyle{\left(\!\!\!\begin{array}{c}\sigma\\\rho\end{array}\!\!\!\right):=\frac{\sigma!}{[\rho!][(\sigma-\rho)!]}}$.
Then, for all~$x,y\in\R^d$, for all $\sigma\in\N_0^d$, for $R:=\{\rho\in\N_0^d\,|\,\rho\le\sigma\}$,
we have a vector binomial formula: $\displaystyle{(x+y)^\sigma=\sum_{\rho\in R}\left(\!\!\!\begin{array}{c}\sigma\\\rho\end{array}\!\!\!\right)x^\rho y^{\sigma-\rho}}$.

Let $M$ and $N$ be manifolds, let $d:=\dim M$ and let $k\in\N_0$.
Let $A:=\{\alpha\in\N_0^d~\hbox{s.t.}~|\alpha|\le k\}$.
Let $f,\phi\in C_O^\infty(M,N)$ and let $p\in M$.
By {\bf$f$~agrees with $\phi$ to order~$k$ at $p$},
written $f\sim\phi\,\,[[k,p]]$, we mean:
\begin{itemize}
\item[] {\bf(} $\,\,$ $p\in(\dom[f])\cap(\dom[\phi])$ $\,\,$ {\bf)} $\,\,$ and $\,\,$ {\bf(} $\,\,$ $f(p)=\phi(p)$ $\,\,$ {\bf)} $\,\,$ and
\item[] {\bf(} $\,\,$ there exist $\lambda\in\scrr_M^0$ and $\mu\in\scrc_N$ such that
\begin{itemize}
\item[] $\,\,$\quad {\bf[} $\lambda(0_d)=p$ {\bf]} \quad and \quad {\bf[} $f(p)\in\dom[\mu]$ {\bf]} \quad and
\item[] {\bf[} $\forall\alpha\in A$, \quad $(\partial^\alpha(\mu\circ f\circ\lambda))(0_d)\,=\,(\partial^\alpha(\mu\circ\phi\circ\lambda))(0_d)$ {\bf]} $\,\,$ {\bf)}.
\end{itemize}
\end{itemize}

Let $M$ and $N$ be manifolds.
Let $f,\phi\in C_O^\infty(M,N)$
and let $p\in M$.
Assume that $p\in(\dom[f])\cap(\dom[\phi])$.
Then we have both
\begin{itemize}
\item[]{\bf[} {\bf(} $f\sim\phi\,\,[[0,p]]$ {\bf)} $\,\Leftrightarrow\,$ {\bf(} $f(p)=\phi(p)$ {\bf)} {\bf]} \quad\quad and
\item[]{\bf[} {\bf(} $f\sim\phi\,\,[[1,p]]$ {\bf)} $\,\Leftrightarrow\,$ {\bf(} $(df)_p=(d\phi)_p$ {\bf)} {\bf]}.
\end{itemize}

Let $L$, $M$ and $N$ be manifolds.
Let $\alpha\in C_O^\infty(L,M)$.
Let $p\in\dom[\alpha]$.
Let $k\in\N_0$ and let $f,\phi\in C_O^\infty(M,N)$.
Assume $f\sim\phi\,\,[[k,\alpha(p)]]$.
Then, by the Chain Rule, $f\circ\alpha\sim\phi\circ\alpha\,\,[[k,p]]$.

Let $L$, $M$ and $N$ be manifolds.
Let $k\in\N_0$, $p\in L$, $f,\phi\in C_O^\infty(L,M)$.
Assume $f\sim\phi\,\,[[k,p]]$.
Let $\omega\in C_O^\infty(M,N)$.
Assume $f(p)\in\dom[\omega]$.
Then, by the Chain Rule, $\omega\circ f\sim\omega\circ\phi\,\,[[k,p]]$.

Let $M$ and $N$ be manifolds and $\scrf\subseteq C_O^\infty(M,N)$.
Let $p\in M$, $k\in\N_0$.
Assume: $\forall f\in\scrf$, $p\in\dom[f]$.
Then $\{(f,\phi)\in\scrf\times\scrf\,|\,f\sim\phi\,\,[[k,p]]\}$
is an equivalence relation on $\scrf$,
and, for any $f\in\scrf$,
the equivalence class in $\scrf$ of $f$ will be denoted
by~$J_p^k(f,\scrf):=\{\phi\in\scrf\,|\,f\sim\phi\,\,[[k,p]]\}$.

Let $V$ and $W$ be finite dimensional vector spaces.
For all $j\in\N_0$, $\scrp_{V,W}^j$ denotes the vector space of
all homogeneous polynomial functions $V\to W$ of degree $=j$.
Then $\scrp_{V,W}^1$ is the vector space of all~homogeneous linear transformations $V\to W$.
For all integers $i\ge0$, for all~integers $k\ge i$,
let $\displaystyle{\scrp_{V,W}^{i,k}:=\sum_{j=i}^k\,\scrp_{V,W}^j}$.
Then, for all $j\in\N_0$, $\scrp_{V,W}^{0,j}$~is the vector space of all
polynomial functions $V\to W$ of degree $\le j$.
Also, for all $j\in\N$, we have
$\scrp_{V,W}^{1,j}=\{P\in\scrp_{V,W}^{0,j}\,|\,P(0_V)=0_W\}$.

Let $V$ and $W$ be finite dimensional vector spaces.
Let $f\in C_O^\infty(V,W)$ and let $x\in\dom[f]$.
For all $j\in\N_0$, by $\Tay_x^j(f)$, we will denote 
the order $j$ Taylor approximation of $f$ at $x$,
{\it i.e.}, the unique $P\in\scrp_{V,W}^{0,j}$
such that $P\sim f\,\,[[j,x]]$.
Also, $\lin_x^f:=[\Tay_x^1(f)]-[(\Tay_x^1(f))(0_V)]\in\scrp_{V,W}^1$
denotes the homogeneous linear part of the Taylor series of $f$ at $x$.

Let $d\in\N$.
Let $V$ and $W$ be $d$-dimensional vector spaces.
Let $f\in C_O^\infty(V,W)$.
Let $x\in\dom[f]$.
According to the Inverse Function Theorem:
{\bf(} there exists a neighborhood $U$ in $\dom[f]$ of $x$ such that $f|U\in D_O^\infty(V,W)$ {\bf)}
$\Leftrightarrow$ {\bf(} $\lin_x^f:V\to W$ is invertible {\bf)}.

For all $d\in\N$,
let $\scrh_d:=\{\eta\in D_O^\infty(\R^d,\R^d)\,|\,0_d\in\dom[\eta],\eta(0_d)=0_d\}$.

Let $d\in\N$, $E:=\R^d$, $j\in\N_0$.
Let $\det:\scrp_{E,E}^1\to\R$ be the determinant.
Let $\scrq^+:=\{P\in\scrp_{E,E}^{1,j}\,|\,\det(\lin_z^P)\ne0\}$.
Then $\scrq^+$~is a dense open subset of $\scrp_{E,E}^{1,j}$.
Let $z:=0_E=0_d$.
For all $\eta\in\scrh_d$, let $J_d^j\eta:=J_z^j(\eta,\scrh_d)$.
Let $H_d^j:=\{J_d^j\eta\,|\,\eta\in\scrh_d\}$.
Then the function $J_d^j\eta\mapsto\Tay_z^j(\eta):H_d^j\to\scrq^+$ is a bijection.
We give $H_d^j$ the unique manifold structure making this bijection into a $C^\infty$ diffeomorphism, as follows.
Let $m^+:=\dim(\scrp_{E,E}^{1,j})$.
Let $\scrk^+$ be the set of vector space isomorphisms $\scrp_{E,E}^{1,j}\to\R^{m^+}$.
For all~$\kappa\in\scrk^+$, we define $C_\kappa^+:H_d^j\to\R^{m^+}$ by:
$$\forall\eta\in\scrh_d,\qquad C_\kappa^+(J_d^j\eta)\,\,=\,\,\kappa(\Tay_z^j(\eta)).$$
Then $\{C_\kappa^+\,|\,\kappa\in\scrk^+\}$ is an atlas on $H_d^j$, making $H_d^j$ into a manifold.
Also, $H_d^j$ is a group under the multiplication induced by composition, {\it i.e.},
under the multiplication given by:
$$\forall\eta,\theta\in\scrh_d,\qquad(J_d^j\eta)(J_d^j\theta)\,\,=\,\,J_d^j(\eta\circ\theta).$$
With this manifold and group structure, $H_d^j$ is a Lie group.

Let $M$ be a manifold and let $d:=\dim M$.
Let $z:=0_d$.
Let $j\in\N_0$.
For all~$\lambda\in\scrr_M^0$, let $J_M^j\lambda:=J_z^j(\lambda,\scrr_M^0)$.
Then the {\bf$j$th order frame bundle of~$M$} is
$F^jM:=\{J_M^j\lambda\,|\,\lambda\in\scrr_M^0\}$.
Define $\pi_M^j:F^jM\to M$ by:
$$\forall\lambda\in\scrr_M^0,\qquad\pi_M^j(J_M^j\lambda)\,\,=\,\,\lambda(z).$$

Let $d\in\N$ and let $z:=0_d$.
Let $M$ and $N$ be $d$-dimensional manifolds.
Let $f\in D_O^\infty(M,N)$.
Define $\zeta:\scrr_M^0\to M$ by $\zeta(\lambda)=\lambda(z)$.
Let $j\in\N_0$.
We then define $F^jf:(\pi_M^j)^*(\dom[f])\to(\pi_N^j)^*(\im[f])$ by:
$$\forall\lambda\in\zeta^*(\dom[f]),\qquad(F^jf)(J_M^j\lambda)\,\,=\,\,J_N^j(f\circ\lambda).$$

Let $d\in\N$, $j\in\N_0$.
Let $L$, $M$ and $N$ be $d$-dimensional manifolds.
With the definitions above, $F^j$ has the following functoriality property:
$\forall f\in D_O^\infty(L,M)$, $\forall g\in D_O^\infty(M,N)$,
we have $F^j(g\circ f)=(F^jg)\circ(F^jf)$.

Let $M$ be a manifold.
Let $j\in\N_0$.
We develop a manifold structure on $F^jM$:
Let $d:=\dim M$, $E:=\R^d$, $z:=0_E=0_d$.
Let $\det:\scrp_{E,E}^1\to\R$ be the determinant.
Let $\scrq:=\{P\in\scrp_{E,E}^{0,j}\,|\,\det(\lin_z^P)\ne0\}$.
Then $\scrq$~is a dense open subset of $\scrp_{E,E}^{0,j}$
and $J_E^j\lambda\mapsto\Tay_z^j(\lambda):F^jE\to\scrq$ is a~bijection.
Let $m:=\dim(\scrp_{E,E}^{0,j})$.
Let $\scrk$ be the set of vector space isomorphisms $\scrp_{E,E}^{0,j}\to\R^m$.
For all $\kappa\in\scrk$, define $C_\kappa:F^jE\to\R^m$ by:
$$\forall\nu\in\scrr_E^0,\qquad C_\kappa(J_E^j\nu)\,\,=\,\,\kappa(\Tay_z^j(\nu)).$$
For all $\kappa\in\scrk$, $\lambda\in\scrc_M$,
let $C_\kappa^\lambda:=C_\kappa\circ(F^j\lambda):(\pi_M^j)^*(\dom[\lambda])\to\R^m$.
Then $\{C_\kappa^\lambda\,|\,\kappa\in\scrk,\,\lambda\in\scrc_M\}$
is an atlas on $F^jM$, making $F^jM$ into a manifold.
Define a right action on $F^jM$ by $H_d^j$ by the rule:
$$\forall\lambda\in\scrr_M^0,\,\,\,\forall\eta\in\scrh_d,\qquad(J_M^j\lambda)(J_d^j\eta)\,\,=\,\,J_M^j(\lambda\circ\eta).$$
Then $\pi_M^j:F^jM\to M$ is a $C^\infty$~principal $H_d^j$-bundle.

Let $d\in\N$ and
let $M$ and $N$ both be $d$-dimensional manifolds.
Let $f\in D_O^\infty(M,N)$.
Let $j\in\N_0$.
Then $F^jf$ is a local isomorphism of~$C^\infty$~principal~$H_d^j$-bundles,
by which we mean all of the following:
\begin{itemize}
\item[$\bullet$]$F^jf\in D_O^\infty(F^jM,F^jN)$,
\item[$\bullet$]$\forall q\in\dom[F^jf],\,\,\qquad\pi_N^j((F^jf)(q))=\pi_M^j(q)$,
\item[$\bullet$]$\dom[F^jf]$ is invariant under the right action on $F^jM$ by $H_d^j$
\item[and $\bullet$]$\forall q\in\dom[F^jf],\,\,\forall h\in H_d^j,\qquad(F^jf)(qh)=[(F^jf)(q)]h$.
\end{itemize}

\section{Fibers of constant rank maps\wrlab{sect-fibers-const-rk}}

\begin{lem}\wrlab{lem-const-rank-consequence}
Let $L$ and $M$ be manifolds.
Let $f:L\to M$ be $C^\infty$.
Assume $f$ has constant rank.
Let $q\in f(L)$ and $A:=f^*(\{q\})$.
Then:
\begin{itemize}
\item[(i)]$A$ is a closed subset of $L$,
\item[(ii)]$A$ is a locally closed submanifold of $L$ \qquad and
\item[(iii)] for all $a\in A$, we have $T_aA=\ker[(df)_a]$.
\end{itemize}
\end{lem}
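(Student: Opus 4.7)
The plan is to apply the Constant Rank Theorem at each point of $A$. Part~(i) is immediate: since $f$ is continuous and $\{q\}$ is closed in $M$, the preimage $A=f^*(\{q\})$ is closed in $L$.

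For~(ii) and~(iii), let $\ell:=\dim L$ and let $r$ be the constant rank of $f$. Fix $a\in A$. By the Constant Rank Theorem, there exist an open neighborhood $U_0$ of $a$ in $L$, an open neighborhood $V_0$ of $q$ in $M$ with $f(U_0)\subseteq V_0$, a chart $\phi_0$ of $L$ defined on $U_0$ with $\phi_0(a)=0_\ell$, and a chart $\psi_0$ of $M$ defined on $V_0$ with $\psi_0(q)=0_{\dim M}$, such that in these coordinates $f$ is the map $(x_1,\ldots,x_\ell)\mapsto(x_1,\ldots,x_r,0,\ldots,0)$. After shrinking, I may assume $\phi_0(U_0)$ is an open ball centered at $0_\ell$; then $\phi_0(A\cap U_0)=\phi_0(U_0)\cap S$, where $S:=\{0_r\}\times\R^{\ell-r}\subseteq\R^\ell$ is an $(\ell-r)$-dimensional subspace.

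To match the paper's definition of ``locally closed submanifold'' verbatim, I next compose with a $C^\infty$ diffeomorphism $\rho:\phi_0(U_0)\to\R^\ell$ that respects the product decomposition $\R^r\times\R^{\ell-r}$ (a radial rescaling in each factor will do), so that $\rho(\phi_0(U_0)\cap S)=S$. Setting $U:=U_0$ and $\phi:=\rho\circ\phi_0:U\to\R^\ell$ yields a $C^\infty$ diffeomorphism with $\phi(A\cap U)=S$. Since $a\in A$ was arbitrary, this proves~(ii) with $p=\ell-r$.

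For~(iii), work in the chart $\phi$ just constructed. The inclusion $\iota:A\hookrightarrow L$, restricted near $a$, becomes in coordinates the inclusion $S\hookrightarrow\R^\ell$, so $T_aA$ corresponds under $d\phi$ to $S$; in the same coordinates $(df)_a$ is projection onto the first $r$ coordinates, whose kernel is exactly $S$. Hence $T_aA=\ker[(df)_a]$. The argument is routine given the Constant Rank Theorem; the only mildly technical point is meeting the paper's specific definition of ``locally closed $p$-submanifold'', which requires the chart to be a diffeomorphism onto all of $\R^\ell$ rather than onto an arbitrary open subset, handled by the post-composition with $\rho$.
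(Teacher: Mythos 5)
Your proof is correct and takes essentially the same approach as the paper, which simply asserts that (i) follows from continuity of $f$ and that (ii) and (iii) follow from the Constant Rank Theorem. You have merely written out the standard details of that citation, including the minor bookkeeping step (post-composing with a diffeomorphism onto all of $\R^\ell$) needed to match the paper's particular phrasing of ``locally closed $p$-submanifold''.
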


\begin{proof}
Result (i) follows from continuity of $f$.
Results (ii) and (iii) follow from the Constant Rank Theorem.
\end{proof}

\section{Orbits of $C^\infty$ actions \wrlab{sect-orbits-smooth-acts}}

Let $G$ be a Lie group acting on a manifold $M$.
We assume that the~$G$-action on $M$ is $C^\infty$.
For all $p\in M$, let $G_p:=\Stab_G(p)$,
and define $\overline{p}:G\to M$ by $\overline{p}(g)=gp$.

\begin{lem}\wrlab{lem-orbit-map-deriv}
Let $p\in M$ and let $a\in G$.
Then $aG_p$ is a closed subset of~$G$
and a locally closed submanifold of $G$.
Also, $T_a(aG_p)=\ker[(d\,\overline{p})_a]$.
\end{lem}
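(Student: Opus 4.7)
The plan is to apply \lref{lem-const-rank-consequence} to the orbit map $\overline{p}:G\to M$ with target point $q:=ap$. The first observation is identifying the fiber: for $g\in G$, we have $gp=ap$ iff $a^{-1}g\in G_p$ iff $g\in aG_p$, so $\overline{p}^*(\{ap\})=aG_p$. Note also that $a\in aG_p$ since $1_G\in G_p$.

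The key step is to verify that $\overline{p}$ has constant rank. For this I would use equivariance of $\overline{p}$ with respect to left translation on $G$ and the $G$-action on $M$. For each $b\in G$, let $L_b:G\to G$ be given by $L_b(g):=bg$, and let $\mu_b:M\to M$ be given by $\mu_b(x):=bx$. Because the $G$-action is $C^\infty$, both $L_b$ and $\mu_b$ are $C^\infty$ diffeomorphisms of $G$ and $M$, respectively. Directly from associativity of the action, $\overline{p}(bg)=(bg)p=b(gp)=\mu_b(\overline{p}(g))$, i.e.,
\[
\overline{p}\circ L_b\,\,=\,\,\mu_b\circ\overline{p}.
\]
Differentiating at any $g\in G$ and using that $(dL_b)_g:T_gG\to T_{bg}G$ and $(d\mu_b)_{gp}:T_{gp}M\to T_{bgp}M$ are linear isomorphisms, one reads off $\dim(\im[(d\overline{p})_{bg}])=\dim(\im[(d\overline{p})_g])$. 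Specializing $g=1_G$ and letting $b$ range over $G$ shows that the rank of $(d\overline{p})_\bullet$ is the same at every point of $G$, so $\overline{p}$ has constant rank.

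Once constant rank is established, \lref{lem-const-rank-consequence} applied to $f:=\overline{p}:G\to M$ and $q:=ap$ gives $A=\overline{p}^*(\{ap\})=aG_p$, which is closed in $G$ by part (i), a locally closed submanifold of $G$ by part (ii), and satisfies $T_a(aG_p)=\ker[(d\overline{p})_a]$ by part (iii), since $a\in aG_p$.

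The only step that is not purely formal is the constant rank verification, and even there the main obstacle is just recognizing that equivariance plus the fact that left translation and the individual action maps are diffeomorphisms forces the rank to be constant; everything else is a direct invocation of \lref{lem-const-rank-consequence}.
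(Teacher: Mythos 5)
Your proof is correct and follows the same route as the paper: identify $aG_p$ as the fiber $\overline{p}^*(\{ap\})$, establish constant rank via equivariance of $\overline{p}$ under left translation, and invoke \lref{lem-const-rank-consequence}. The only difference is that you spell out the equivariance--rank argument in detail where the paper compresses it to ``by transitivity of the $G$-action on $G$.''
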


\begin{proof}
Let $G$ act on $G$ by left translation
so that the action of $g\in G$ on~$g'\in G$ yields $gg'\in G$.
This action induces an action of $G$ on~$TG$.
The map $\overline{p}:G\to M$ is $G$-equivariant.
Then $d\,\overline{p}:TG\to TM$ is also $G$-equivariant.
By transitivity of the $G$-action on $G$,
we see that $\overline{p}$ has constant rank.
Let $q:=\overline{p}(a)$.
Then $aG_p=\overline{p}^*(\{q\})$.
The result is then a consequence of \lref{lem-const-rank-consequence}
(with $L$ replaced by $G$ and $f$ by $\overline{p}$).
\end{proof}

\begin{cor}\wrlab{cor-orbit-map-deriv}
Let $p\in M$.
Then $G_p$ is a closed subset of $G$ and a locally closed submanifold of~$G$.
Also, for all $a\in G_p$, we  have $T_aG_p=\ker[(d\,\overline{p})_a]$.
\end{cor}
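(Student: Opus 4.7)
The plan is to deduce this directly from \lref{lem-orbit-map-deriv} by specializing the element $a$ and exploiting the fact that $G_p$ is a subgroup of $G$. Observe first that $G_p=\Stab_G(p)$ is closed under multiplication and inversion (if $g,h\in G_p$ then $(gh)p=g(hp)=gp=p$, and $g^{-1}p=g^{-1}(gp)=p$) and contains $1_G$, hence is a subgroup of $G$. In particular, for every $a\in G_p$, we have $aG_p=G_p$.

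For the first two claims, I would simply apply \lref{lem-orbit-map-deriv} with the choice $a:=1_G$. Since $1_G\in G_p$, the coset $1_G\cdot G_p$ equals $G_p$, and the lemma gives that $G_p$ is a closed subset of $G$ and a locally closed submanifold of $G$.

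For the tangent space identity, fix any $a\in G_p$. Because $G_p$ is a subgroup, $aG_p=G_p$, so as subsets of $G$ (and hence as locally closed submanifolds) these coincide; therefore $T_a(aG_p)=T_aG_p$. Applying \lref{lem-orbit-map-deriv} with this $a$ yields $T_a(aG_p)=\ker[(d\,\overline{p})_a]$, and combining gives $T_aG_p=\ker[(d\,\overline{p})_a]$, as required.

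There is no real obstacle here: once one notices that $\Stab_G(p)$ is a subgroup and therefore absorbs its own left translations, the corollary is an immediate specialization of \lref{lem-orbit-map-deriv}. The only thing to be slightly careful about is the implicit identification (discussed in \secref{sect-global}) between the tangent bundle of the locally closed submanifold $G_p$ and the corresponding subset of $TG$, but the statement of \lref{lem-orbit-map-deriv} has already been set up with this identification in force.
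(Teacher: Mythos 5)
Your proof is correct and follows essentially the same route as the paper: the paper's own proof is the one-line observation that $aG_p=G_p$ for all $a\in G_p$, followed by an appeal to \lref{lem-orbit-map-deriv}. You have merely spelled out the (standard) verification that $G_p$ is a subgroup, which the paper leaves implicit.
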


\begin{proof}
As {\bf(}$\forall a\in G_p,aG_p=G_p${\bf)},
the result follows from \lref{lem-orbit-map-deriv}.
\end{proof}

\section{Infinitesmal stabilizers in $C^\infty$ actions \wrlab{sect-inftsml-stabs}}

Let $G$ be a Lie group acting on a manifold $M$.
We assume that the~$G$-action on $M$ is $C^\infty$.
For all $p\in M$, let $G_p:=\Stab_G(p)$,
and define $\overline{p}:G\to M$ by $\overline{p}(g)=gp$.
Let $\Lg:=T_{1_G}G$.
For all $X\in\Lg$, for all~$p\in M$, let $Xp:=(d\,\overline{p})_{1_G}(X)\in T_pM$.
For all $p\in M$, let $\Lg_p:=T_{1_G}G_p$.

\begin{lem}\wrlab{lem-orbit-map-deriv-redux}
Let $p\in M$.
Then $\Lg_p=\ker[X\mapsto Xp:\Lg\to T_pM]$.
\end{lem}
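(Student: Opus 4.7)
The plan is to derive this lemma as a direct specialization of \cref{cor-orbit-map-deriv} at the point $a=1_G$. The lemma is essentially an unpacking-of-definitions argument once that corollary is in hand.

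First, I would observe that $1_G \in G_p$, since $1_G \cdot p = p$ by the axioms of a group action, so $1_G$ is an eligible choice for the point ``$a$'' in \cref{cor-orbit-map-deriv}. That corollary then yields
\[
T_{1_G}G_p \;=\; \ker\bigl[(d\,\overline{p})_{1_G}\bigr].
\]
Next, I would simply rewrite both sides in the notation introduced just before the lemma's statement. By definition, $\Lg_p := T_{1_G}G_p$, so the left side becomes $\Lg_p$. And by definition, $(d\,\overline{p})_{1_G}$ is a linear map $T_{1_G}G \to T_pM$, that is, a linear map $\Lg \to T_pM$, whose value on $X\in\Lg$ is precisely $Xp$. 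Hence its kernel equals $\ker[X\mapsto Xp:\Lg\to T_pM]$, giving the right side.

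Since each step is a matter of definition-checking or of citing the previous corollary, there is no real obstacle here. The only mild subtlety worth being careful about is the implicit identification between $T_{1_G}G_p$ viewed as the tangent space of the abstract manifold $G_p$ and the same symbol viewed as a subset of $T_{1_G}G$ via the differential of inclusion $\iota:G_p\to G$ at $1_G$; this is exactly the identification that the global conventions of \secref{sect-global} permit without comment, and it is already used in the statement of \cref{cor-orbit-map-deriv}, so it requires no further justification.
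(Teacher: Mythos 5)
Your proof is correct and is essentially the same as the paper's: set $a=1_G$, apply \cref{cor-orbit-map-deriv}, and unwind the definitions $\Lg_p=T_{1_G}G_p$ and $Xp=(d\,\overline{p})_{1_G}(X)$.
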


\begin{proof}
Let $a:=1_G$.
Then, for all $X\in \Lg$, we have $Xp=(d\,\overline{p})_a(X)$,
so, since $\Lg_p=T_aG_p$, the result follows from \cref{cor-orbit-map-deriv}.
\end{proof}

\begin{cor}\wrlab{cor-inject-Lie-algebra}
Let $p\in M$.
Assume that $G_p$ is discrete.
Then $X\mapsto Xp:\Lg\to T_pM$ is injective.
\end{cor}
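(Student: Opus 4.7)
The plan is to combine \lref{lem-orbit-map-deriv-redux} with the submanifold structure on $G_p$ established in \cref{cor-orbit-map-deriv}. By \lref{lem-orbit-map-deriv-redux}, the kernel of $X \mapsto Xp : \Lg \to T_pM$ equals $\Lg_p = T_{1_G} G_p$, so it suffices to show that $T_{1_G} G_p = \{0_\Lg\}$ under the assumption that $G_p$ is discrete.

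First, I would invoke \cref{cor-orbit-map-deriv} to observe that $G_p$ is a locally closed submanifold of $G$, so it carries a well-defined manifold structure and a well-defined dimension (more precisely, it is a locally closed $p$-submanifold for some integer $p$, in the sense of the definition in \secref{sect-global}). Next, I would argue that, since $G_p$ is discrete in $G$, every point of $G_p$ is isolated; by the local model defining a locally closed $p$-submanifold (near each point $a \in G_p$, a chart of $G$ identifies $G_p$ with a $p$-dimensional subspace of $\R^{\dim G}$), this forces $p = 0$, i.e., $G_p$ is a $0$-dimensional submanifold.

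Then $T_{1_G} G_p$ is the tangent space at $1_G$ of a $0$-dimensional manifold, hence $\Lg_p = T_{1_G} G_p = \{0_\Lg\}$. Applying \lref{lem-orbit-map-deriv-redux} yields $\ker[X \mapsto Xp : \Lg \to T_pM] = \{0_\Lg\}$, which is exactly the injectivity claim.

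The only nontrivial step is the translation ``discrete locally closed submanifold $\Rightarrow$ $0$-dimensional'', which is immediate from the chart definition given in \secref{sect-global} but deserves being spelled out; everything else is a direct citation of \lref{lem-orbit-map-deriv-redux} and \cref{cor-orbit-map-deriv}.
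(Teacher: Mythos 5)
Your proposal is correct and follows essentially the same route as the paper: reduce via \lref{lem-orbit-map-deriv-redux} to showing $\Lg_p=\{0_\Lg\}$, and deduce that from discreteness of $G_p$. The paper states the implication ``$G_p$ discrete $\Rightarrow$ $\Lg_p=\{0_\Lg\}$'' in one line, whereas you expand it by citing \cref{cor-orbit-map-deriv} for the submanifold structure; that is a reasonable elaboration, not a different method (though note you reuse the letter $p$ both for the point of $M$ and for the submanifold dimension, which should be renamed).
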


\begin{proof}
Since $G_p$ is discrete, we see that $\Lg_p=\{0_\Lg\}$.
Then, by \lref{lem-orbit-map-deriv-redux},
$X\mapsto Xp:\Lg\to T_pM$ has kernel $\{0_\Lg\}$,
and is therefore injective.
\end{proof}

\section{Fixpoint rare for connected locally compact groups\wrlab{sect-fixpt-rare-lcgp}}

\begin{lem}\wrlab{lem-fixpt-rare-lcgp}
Let $G$ be a connected locally compact Hausdorff topological group
acting on a Hausdorff topological space $X$.
Assume, for all $g\in G$, that the map $x\mapsto gx:X\to X$ is continuous.
Assume, for all $x\in X$, that the map $g\mapsto gx:G\to X$ is continuous.
Assume, for every nonempty $G$-invariant open subset $V$ of $X$,
that the $G$-action on~$V$ is effective.
Then the $G$-action on $X$ is fixpoint rare.
\end{lem}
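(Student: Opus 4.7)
The plan is to argue by contradiction. Assume some $g_0\in G\setminus\{1_G\}$ has $U_0:=\Int\Fix_X(g_0)$ nonempty. Form the $G$-invariant nonempty open set $V_0:=\bigcup_{h\in G}hU_0$. Applying the hypothesis to $V_0$, the $G$-action on~$V_0$ is effective, so the closed normal subgroup $K:=\{g\in G\,|\,gv=v\,\,\forall v\in V_0\}$ equals~$\{1_G\}$. I would aim to contradict this by producing a non-identity element of~$K$.

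Since each $h\in G$ acts on~$X$ as a homeomorphism, the conjugate $hg_0h^{-1}$ fixes the translate $hU_0$ pointwise. Writing $H:=\{g\in G\,|\,g|U_0=\id_{U_0}\}$, a closed subgroup containing $g_0$, one checks that $K=\bigcap_{h\in G}hHh^{-1}$, the \emph{normal core} of $H$ in~$G$. The task therefore reduces to showing that this normal core is nontrivial.

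I would approach this via a local-to-global argument. Pick $v_0\in U_0$. Continuity of $h\mapsto hv_0$ makes $\Omega:=\{h\in G\,|\,hv_0\in U_0\}$ an open neighborhood of~$1_G$ in~$G$, and for each $h\in\Omega$ one has $h^{-1}g_0h\in\Stab_G(v_0)$. Using local compactness, replace $\Omega$ by a compact symmetric neighborhood $C$ of $1_G$ inside $\Omega$; by connectedness, $G=\bigcup_{n\ge1}C^n$. One then tries to build, by induction on $n$, a sequence of non-identity elements $g_n\in G$ whose pointwise fixed sets in $V_0$ exhaust~$V_0$, and to extract a subsequential limit using compactness of $C$ and closedness of stabilizers.

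The main obstacle is precisely this limiting step. A product of finitely many conjugates $h_ig_0h_i^{-1}$ fixes only the intersection $\bigcap_ih_iU_0$, which is typically much smaller than $V_0$, and such a product may moreover drift toward~$1_G$. Arranging the induction so that the limit exists, is non-identity, and fixes all of~$V_0$ is the substantive part of the proof, and it is precisely here that the connectedness and local compactness of~$G$ must be used in a nontrivial way (for instance, by passing to a Lie-group quotient and analyzing the identity component of a suitable non-discrete closed subgroup).
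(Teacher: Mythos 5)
Your framework is set up correctly as far as it goes: assume $g_0\neq 1_G$ has $U_0:=\Int\Fix_X(g_0)\neq\emptyset$, form the $G$-invariant open set $V_0:=GU_0$, invoke effectiveness there, and seek a contradiction. Your identification of $K=\bigcap_{h\in G}hHh^{-1}$ as the normal core of $H:=\{g\mid g|U_0=\id_{U_0}\}$ is also correct. But the proof stops exactly where it has to begin: you announce that producing a non-identity element of $K$ (or otherwise obtaining a contradiction) is ``the substantive part of the proof,'' and then you do not supply it. A plan that names the difficulty is not a proof of the lemma.

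More to the point, the route you sketch --- producing a nontrivial element of the normal core of the fixer $H$ by a compactness/limit argument over $G=\bigcup_n C^n$ --- is not what the paper does, and you yourself note why it is problematic: a product of conjugates $h_ig_0h_i^{-1}$ fixes only $\bigcap_i h_iU_0$, which shrinks, and limits may drift to $1_G$. The paper's proof avoids the normal-core reduction entirely. Instead it picks a point $x_1\in GV_0$ with $g_0x_1\neq x_1$, sets $H:=\Stab_G(x_1)$ (so $g_0\notin H$), fixes an open neighborhood $U:=g_0^{-1}(G\setminus H)$ of $1_G$, and uses the Montgomery--Zippin approximation theorem to choose a compact normal $K\subseteq U$ with $\overline{G}:=G/K$ a Lie group. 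On the analytic homogeneous space $Y:=\overline{G}/\overline{H}$ one shows that $\overline{g_0}$ fixes a nonempty open set pointwise (because conjugates $a^{-1}g_0a$ with $ax_1\in V_0$ lie in $H$), and since $Y$ is connected and the action is $C^\omega$, $\overline{g_0}$ fixes all of $Y$, in particular the basepoint; hence $g_0\in HK$. Writing $g_0=hk$ then gives $h=g_0k^{-1}\in g_0K\subseteq g_0U=G\setminus H$, contradicting $h\in H$. This analytic-continuation step on $Y$ is the missing engine of the argument, and your one-sentence gesture at ``passing to a Lie-group quotient and analyzing the identity component of a suitable non-discrete closed subgroup'' neither locates it nor substitutes for it. As written, the proposal has a genuine gap.
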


\begin{proof}
Let $g_0\in G\backslash\{1_G\}$ be given, and let $F_0:=\Fix_X(g_0)$.
We wish to show that $F_0$ has empty interior in $X$.
Let $V_0$ be the interior in $X$ of~$F_0$.
Assume that $V_0\ne\emptyset$.
We aim for a contradiction.

Let $V_1:=GV_0$.
As $V_1$ is a nonempty $G$-invariant open subset of~$X$,
by~hypothesis, the $G$-action on $V_1$ is effective.
Then $\Fix_{V_1}(g_0)\subsetneq V_1$.
Choose $x_1\in V_1$ such that $x_1\notin\Fix_{V_1}(g_0)$.
Then we have $g_0x_1\ne x_1$.
Let $H:=\Stab_G(x_1)$.
Then $g_0\notin H$.
Let $U_0:=G\backslash H$ be the complement in~$G$ of~$H$.
Then $U_0$ is an~open neighborhood in $G$ of $g_0$.
Let $U:=g_0^{-1}U_0$.
Then $U$ is an open neighborhood in $G$ of $1_G$.
By the Approximation Theorem in~\S4.6, p.~175 of \cite{montzip:transfgps},
choose a compact normal subgroup $K$ of $G$ such that $K\subseteq U$
and such that $G/K$ admits a Lie group structure compatible with its quotient topology.
Let $\overline{G}:=G/K$, and give $\overline{G}$
its quotient topology and compatible Lie group structure.
Then give the Lie group $\overline{G}$ its natural $C^\omega$~manifold structure.
Let $\pi:G\to\overline{G}$ be the~canonical homomorphism.
Let $\overline{g_0}:=\pi(g_0)$.

Since $x_1\in V_1=GV_0$,
choose $c\in G$ and $x_0\in V_0$ such that $x_1=cx_0$.
Let $A:=\{g\in G\,|\,gx_1\in V_0\}$.
Then $Ax_1\subseteq V_0$.
As $c^{-1}x_1=x_0\in V_0$, it follows that $c^{-1}\in A$, and so $A\ne\emptyset$.
Also, since $g\mapsto gx_1:G\to X$ is continuous,
we see that $A$ is open in $G$.
Let $\overline{A}:=\pi(A)$.
Since $\pi:G\to\overline{G}$ is an open mapping
and since $A$ is a nonempty open subset of $G$,
we see that $\overline{A}$ is a nonempty open subset of $\overline{G}$.
Let $\overline{H}:=\pi(H)$.
Then $\pi^*(\overline{H})=HK$.
Since $H$ is closed in $G$ and $K$ is compact,
it follows that $HK$ is closed in $G$.
Then $\overline{H}$ is closed in $\overline{G}$.
Let $Y:=\overline{G}/\,\overline{H}$.
Let $p:\overline{G}\to Y$ be the canonical map.
By the theory of homogeneous spaces of Lie groups,
give $Y$ the unique $C^\omega$~manifold structure under which $p:\overline{G}\to Y$ is a $C^\omega$ submersion.
Let $y_1:=p(1_{\overline{G}})$.
Then $\Stab_{\overline{G}}(y_1)=\overline{H}$.

{\it Claim 1:} $\forall y\in Y,\,\overline{g_0}y=y$.
{\it Proof of Claim 1:}
Let $W:=p(\overline{A})$.
Since $G$ is connected, $\overline{G}$~is connected, so $Y$ is connected.
Because $p:\overline{G}\to Y$ is open and because $\overline{A}$~is a nonempty open subset of $\overline{G}$,
we see that $W$~is a nonempty open subset of $Y$.
Then, as $y\mapsto\overline{g_0}y:Y\to Y$ is $C^\omega$,
it suffices to show, for all $w\in W$, that $\overline{g_0}w=w$.
Let $w\in W$ be given.
We wish to prove that $\overline{g_0}w=w$.

As $w\in W=p(\overline{A})=p(\pi(A))$,
choose $a\in A$ such that $w=p(\pi(a))$.
Let $g_1:=a^{-1}g_0a$.
Let $\overline{g_1}:=\pi(g_1)$ and $\overline{a}:=\pi(a)$.
Then $\overline{a}\,\overline{g_1}\,\overline{a}^{-1}=\overline{g_0}$.
Because $ax_1\in Ax_1\subseteq V_0\subseteq F_0=\Fix_X(g_0)$,
we see that $g_0ax_1=ax_1$, so $g_1x_1=a^{-1}g_0ax_1=a^{-1}ax_1=x_1$.
Then $g_1\in\Stab_G(x_1)=H$,
and it follows that
$\overline{g_1}=\pi(g_1)\in\pi(H)=\overline{H}=\Stab_{\overline{G}}(y_1)$.
Then $\overline{g_1}y_1=y_1$,
Moreover, we have $w=p(\pi(a))=p(\overline{a})=\overline{a}[p(1_{\overline{G}})]=\overline{a}y_1$.
Therefore, $\overline{g_0}w=\overline{a}\,\overline{g_1}\,\overline{a}^{-1}\,\overline{a}y_1
=\overline{a}\,\overline{g_1}y_1=\overline{a}y_1=w$.
{\it End of proof of Claim~1.}

By Claim 1, we have $\overline{g_0}y_1=y_1$, so $\overline{g_0}\in\Stab_{\overline{G}}(y_1)$.
It follows that $\pi(g_0)=\overline{g_0}\in\Stab_{\overline{G}}(y_1)=\overline{H}$.
Then $g_0\in\pi^*(\overline{H})=HK$.
Choose $h\in H$, $k\in K$ such that $g_0=hk$.
Then $h=g_0k^{-1}\in g_0K\subseteq g_0U=U_0=G\backslash H$, so $h\notin H$.
Then $h\in H$ and $h\notin H$.
Contradiction.
\end{proof}

In \lref{lem-fixpt-rare-lcgp}, we cannot drop the assumption that $G$ is connected,
even if we add the assumption that $X$ is connected:

\vskip.1in\noindent
{\it Example:}
Let $X:=\R$.
Let $\scrd$ denote the group of all $C^\infty$ diffeomorphisms of $X$.
Then $\scrd$~acts effectively on~$X$.
Choose $f\in\scrd$ such that $\Fix_X(f)=[-1,1]$.
For all~$q\in\Q$, let $T_q\in\scrd$ be defined by $T_q(x)=q+x$.
Let $G$ be the subgroup of $\scrd$ generated by $\{f\}\cup\{T_q\,|\,q\in\Q\}$.
Since $\scrd$~acts effectively on $X$,
it follows that the $G$-action on $X$ is also effective.
Give to $G$ the discrete topology and manifold structure.
Then the $G$-action on $X$ is~$C^\infty$.
For all $x\in X$, since $\Q+x=\{T_q(x)\,|\,q\in\Q\}\subseteq Gx$,
we see that $Gx$ is dense in~$X$.
Then, for any nonempty $G$-invariant subset~$V$ of $X$,
we see that $V$ is dense in~$X$, and so, because the $G$-action on $X$ is effective,
it follows that the $G$-action on~$V$ is~effective as well.
On the other hand, $f\in G\backslash\{1_G\}$ and $\Fix_X(f)=[-1,1]$, so the $G$-action on~$X$ is not fixpoint rare.
{\it End of example.}

\section{Induced maps on polynomial spaces\wrlab{sect-induced-polynomials}}

Let $j,d\in\N$.
Let $E:=\R^d$.
Let $V:=\scrp_{E,E}^{0,j}$.
Let $z:=0_E=0_d$.
Define $\zeta:V\to E$ by $\zeta(P)=P(z)$.
For all $\omega\in C_O^\infty(E,E)$,
define $\omega_*:\zeta^*(\dom[\omega])\to V$ by $\omega_*(P)=\Tay_z^j(\omega\circ P)$;
then $\omega_*\in C_O^\infty(V,V)$.

\begin{lem}\wrlab{lem-drop-j-derivs}
Let $f\in C_O^\infty(E,E)$.
Let $i\in\N_0$.
Let $Q\in V$,
and assume that $f\sim\bfz_{E,E}\,\,[[i+j,Q(z)]]$.
Then $f_*\sim\bfz_{V,V}\,\,[[i,Q]]$.
\end{lem}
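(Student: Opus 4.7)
The plan is to verify the defining conditions of $f_*\sim\bfz_{V,V}\,\,[[i,Q]]$: that $f_*(Q)=0_V$, and that all partial derivatives of $f_*$ at $Q$ through order $i$, measured in any smooth chart, vanish. Since $V$ is a finite-dimensional vector space, I will work in the natural affine coordinates on $V$ based at $Q$, which reduces the problem to showing that the smooth map $P\mapsto f_*(Q+P)$, defined on a neighborhood of $0$ in $V$, has vanishing Taylor expansion through order $i$ at $P=0$. By polarization, this is equivalent to showing, for every fixed $P\in V$, that the one-parameter family $t\mapsto f_*(Q+tP)$ satisfies $\partial_t^k[f_*(Q+tP)]|_{t=0}=0_V$ for each $0\le k\le i$.

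Set $y_0:=Q(z)$, $a(h):=Q(h)-y_0$, and $b(h):=P(h)$; then $a$ and $b$ are polynomial maps $E\to E$ with $a(z)=0_E$, and $(Q+tP)(h)=y_0+a(h)+tb(h)$. Define $F(h,t):=f(y_0+a(h)+tb(h))$, smooth jointly in $(h,t)$ near $(z,0)$. By the definition of $f_*$,
\[
f_*(Q+tP)\,=\,\Tay_z^j[\,h\mapsto F(h,t)\,]
\]
as a polynomial in $h$ of degree $\le j$.

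The operations $\Tay_z^j$ (truncation in $h$) and $\partial_t^k|_{t=0}$ (differentiation in $t$) commute, since they act on independent variables. Hence
\[
\partial_t^k[f_*(Q+tP)]|_{t=0}\,=\,\Tay_z^j[\,h\mapsto\partial_t^k F(h,t)|_{t=0}\,].
\]
Applying the chain rule componentwise to $f=(f^{(1)},\ldots,f^{(d)})$, one sees that $\partial_t^k F(h,t)|_{t=0}$ is a finite sum of terms, each a scalar multiple of $b(h)^\beta\cdot(\partial^\beta f)(y_0+a(h))$ with $|\beta|=k$. The hypothesis $f\sim\bfz_{E,E}\,\,[[i+j,y_0]]$ is equivalent (by jet invariance under smooth chart change, using standard coordinates on $E=\R^d$) to the statement $(\partial^\alpha f)(y_0)=0$ for every $|\alpha|\le i+j$; Taylor's theorem then gives $(\partial^\beta f)(y_0+u)=O(|u|^{i+j-|\beta|+1})$ as $u\to0_E$. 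Since $a(h)=O(|h|)$ near $z$, we conclude $(\partial^\beta f)(y_0+a(h))=O(|h|^{i+j-|\beta|+1})$; multiplying by the smooth factor $b(h)^\beta$ yields $\partial_t^k F(h,t)|_{t=0}=O(|h|^{i+j-k+1})$. When $k\le i$, the exponent $i+j-k+1\ge j+1$, so the $\Tay_z^j$-truncation of this quantity vanishes identically.

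Therefore $\partial_t^k[f_*(Q+tP)]|_{t=0}=0_V$ for all $0\le k\le i$ and every $P\in V$: the $k=0$ case is exactly $f_*(Q)=0_V$, and the cases $1\le k\le i$, varied over $P$, yield (by polarization) the vanishing of every partial derivative of $f_*$ at $Q$ through order $i$, which is what is needed. The only somewhat delicate step is the bookkeeping converting vanishing of $\partial^\alpha f$ at $y_0$ into vanishing of $f_*$ to the stated order; the key quantitative input is that the $\Tay_z^j$ truncation drops the order of vanishing in $h$ by exactly $j$, which is precisely what forces the hypothesis to require vanishing of $f$ to order $i+j$ rather than merely $i$.
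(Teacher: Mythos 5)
Your proof is correct, and it takes a genuinely different route from the paper's. The paper factors $f_* = \Omega\circ\mu$, where $\mu(P) = (\Tay_{P(z)}^j(f),\,P)$ and $\Omega(F,\Pi) = \Tay_z^j(F\circ\Pi)$; it then shows $\mu$ agrees with $\nu(P) = (0_V,P)$ to order $i$ at $Q$ by expanding the field of $j$-jets $x\mapsto\Tay_x^j(f)$ as a finite sum $\sum_\alpha \eta_\alpha$, with each summand of the form $[H_\alpha(x)]\odot[(\partial^\alpha f)(x)]$, and invoking a Product Rule for jets. This is a structural argument: once the factorization through the bilinear map $\odot$ and the polynomial map $\Omega$ is set up, the conclusion is almost formal, and the calculus is concentrated in a single Leibniz-type step. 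Your argument instead works directly in the affine chart at $Q$: you reduce by polarization to one-parameter families $t\mapsto f_*(Q+tP)$, commute $\partial_t^k$ with the $h$-truncation $\Tay_z^j$, compute $\partial_t^k F(h,t)|_{t=0}$ by the linear-path chain rule as a sum of terms $b(h)^\beta(\partial^\beta f)(y_0+a(h))$ with $|\beta|=k$, and then do an explicit order-of-vanishing count. What your route buys is conceptual transparency about exactly where the exponent budget $i+j$ gets spent ($j$ lost to the truncation, $k\le i$ to the $t$-derivatives). What the paper's route buys is a reusable factorization that avoids any chart-level estimates and any appeal to polarization; it stays entirely in the jet calculus already developed. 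Two small points worth keeping in mind in your write-up: the commutation of $\Tay_z^j$ with $\partial_t^k|_{t=0}$ relies on smooth joint dependence and equality of mixed partials, which you should cite; and the polarization step needs to be applied componentwise to the $V$-valued map, which is routine but should be noted.
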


\begin{proof}
Let $V_0:=\scrp_{E,\R}^{0,j}$.
Let $(P,y)\mapsto P\odot y:V_0\times E\to V$
be the bilinear map defined by $(P\odot y)(w)=[P(w)]y$.
Let $A:=\{\alpha\in\N_0^d~\hbox{s.t.}~|\alpha|\le j\}$.
Let $U:=\dom[f]$ and let $p:=Q(z)$.
For all $\alpha\in A$, let $H_\alpha:E\to V_0$
be defined by $\displaystyle{(H_\alpha(x))(w)=\frac{(w-x)^\alpha}{\alpha!}}$,
and let $\eta_\alpha:U\to V$
be defined by~$\eta_\alpha(x)=[H_\alpha(x)]\odot[(\partial^\alpha f)(x)]$.
Since $f\sim\bfz_{E,E}\,\,[[i+j,p]]$,
it follows, for all $\alpha\in A$, that $\partial^\alpha f\sim\bfz_{E,E}\,\,[[i,p]]$.
So, by the Product Rule for $\odot$,
we conclude, for all $\alpha\in A$, that $\eta_\alpha\sim\bfz_{E,V}\,\,[[i,p]]$.

Define $\tau:U\to V$ by $\tau(x)=\Tay_x^j(f)$.
For all $x\in U$, for all $w\in E$,
\begin{eqnarray*}
(\tau(x))(w)&=&\sum_{\alpha\in A}\,\,\left[\frac{(w-x)^\alpha}{\alpha!}\right]\,\left[(\partial^\alpha f)(x)\right]\\
&=&\sum_{\alpha\in A}\,\,[(H_\alpha(x))(w)]\,[(\partial^\alpha f)(x)]\\
&=&\sum_{\alpha\in A}\,\,\bigg([H_\alpha(x)]\odot[(\partial^\alpha f)(x)]\bigg)(w)\\
&=&\sum_{\alpha\in A}\,\,(\eta_\alpha(x))(w).
\end{eqnarray*}
Then $\displaystyle{\tau=\sum_{\alpha\in A}\,\eta_\alpha}$.
Then $\tau\sim\bfz_{E,V}\,\,[[i,p]]$.
So, since $\zeta(Q)=Q(z)=p$,
it follows that $\tau\circ\zeta\sim\bfz_{E,V}\circ\zeta\,\,[[i,Q]]$.
Let $\mu_0:=\tau\circ\zeta:\zeta^*(U)\to V$.
Let $\nu_0:=\bfz_{V,V}:V\to V$.
Then we have $\nu_0=\bfz_{E,V}\circ\zeta$.
It follows that $\mu_0\sim\nu_0\,\,[[i,Q]]$.
Define $\mu:\zeta^*(U)\to V\times V$ and $\nu:V\to V\times V$ by
$$\mu(P)=(\mu_0(P),P)\qquad\hbox{and}\qquad\nu(P)=(\nu_0(P),P).$$
Then $\mu\sim\nu\,\,[[i,Q]]$.
Define $\Omega:V\times V\to V$ by $\Omega(P,\Pi)=\Tay_z^j(P\circ\Pi)$.
Then $\Omega\circ\mu\sim\Omega\circ\nu\,\,[[i,Q]]$.
It therefore suffices to show both that $f_*=\Omega\circ\mu$ and that $\bfz_{V,V}=\Omega\circ\nu$.

For all $P\in V$, we have $\bfz_{V,V}(P)=0_V=\bfz_{E,E}$, and so
\begin{eqnarray*}
(\Omega\circ\nu)(P)&=&\Omega(\nu(P))\,\,=\,\,\Omega(\nu_0(P),P)\,\,=\,\,\Omega(\bfz_{V,V}(P),P)\\
&=&\Omega(\bfz_{E,E},P)\,\,=\,\,\Tay_z^j(\bfz_{E,E}\circ P)\\
&=&\Tay_z^j(\bfz_{E,E})\,\,=\,\,\bfz_{E,E}\,\,=\,\,\bfz_{V,V}(P).
\end{eqnarray*}
Then $\Omega\circ\nu=\bfz_{V,V}$.
It remains to show that $f_*=\Omega\circ\mu$.

Because $\im[\mu]\subseteq V\times V=\dom[\Omega]$,
we get $\dom[\Omega\circ\mu]=\dom[\mu]$.
Then $\dom[\Omega\circ\mu]=\zeta^*(U)$.
Also, $\dom[f_*]=\zeta^*(\dom[f])=\zeta^*(U)$.
Let $P\in\zeta^*(U)$ be given.
We wish to prove that $f_*(P)=(\Omega\circ\mu)(P)$.

Let $u:=P(z)$.
Then $u=\zeta(P)$.
Then $u\in\zeta(\zeta^*(U))\subseteq U=\dom[f]$.
Let $F:=\Tay_u^j(f)$.
Then $F\sim f\,\,[[j,u]]$,
so, since $P(z)=u$, we see that $F\circ P\sim f\circ P\,\,[[j,z]]$.
Then $\Tay_z^j(F\circ P)=\Tay_z^j(f\circ P)$.
Also, we have $\mu_0(P)=(\tau\circ\zeta)(P)=\tau(\zeta(P))=\tau(u)=\Tay_u^j(f)=F$.
Then
$(\Omega\circ\mu)(P)\,\,=\,\,\Omega(\mu(P))\,\,=\,\,\Omega(\mu_0(P),P)\,\,=\,\,\Omega(F,P)\,\,=\,\,\Tay_z^j(F\circ P)$.
Then $f_*(P)=\Tay_z^j(f\circ P)=\Tay_z^j(F\circ P)=(\Omega\circ\mu)(P)$, as desired.
\end{proof}

\begin{lem}\wrlab{lem-add-drop-j-derivs}
Let $f\in C_O^\infty(E,E)$.
Let $i\in\N_0$.
Let $I:=id_E\in V$.
Assume that $z\in\dom[f]$.
Then:
$${\bm(}\quad f\sim\bfz_{E,E}\,\,[[i+j,z]]\quad{\bm)}\quad\quad\Leftrightarrow\quad\quad{\bm(}\quad f_*\sim\bfz_{V,V}\,\,[[i,I]]\quad{\bm)}.$$
\end{lem}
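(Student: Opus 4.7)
The forward implication should be immediate from \lref{lem-drop-j-derivs} applied with $Q := I$: since $I(z) = \id_E(0_d) = 0_d = z$, the hypothesis $f \sim \bfz_{E,E}\,\,[[i+j,\,Q(z)]]$ of that lemma collapses to the given $f \sim \bfz_{E,E}\,\,[[i+j,\,z]]$, and its conclusion reads $f_* \sim \bfz_{V,V}\,\,[[i,\,I]]$. All of the content therefore lies in the reverse implication.

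For the reverse direction, my plan is to pre- and post-compose $f_*$ with simple auxiliary maps, translating $i$-th order flatness of $f_*$ at $I$ into the vanishing of all partial derivatives of $f$ at $z$ up to order $i+j$. I would define $\Psi : E \to V$ by $\Psi(v)(w) := w + v$; this is affine, hence $C^\infty$, and satisfies $\Psi(0_E) = I$. For each $w \in E$, let $e_w : V \to E$ be the linear (hence $C^\infty$) evaluation map $P \mapsto P(w)$. Since $z \in \dom[f]$, the composition $e_w \circ f_* \circ \Psi$ is defined on an open neighborhood of $0_E$. Invoking the two chain-rule observations recorded in \secref{sect-global} (one for precomposition along $\Psi$, one for postcomposition by $e_w$) with the hypothesis $f_* \sim \bfz_{V,V}\,\,[[i,\,I]]$ yields, for every $w \in E$,
$$
e_w \circ f_* \circ \Psi \;\sim\; \bfz_{E,E}\,\,[[i,\,0_E]].
$$

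A routine unwinding of the definitions of $\Psi$, $f_*$, and $e_w$ will show that
$$
(e_w \circ f_* \circ \Psi)(v) \;=\; \sum_{|\alpha| \le j}\, \frac{w^\alpha}{\alpha!}\,(\partial^\alpha f)(v).
$$
For each multi-index $\beta \in \N_0^d$ with $|\beta| \le i$, applying $\partial^\beta$ in $v$ at $v = 0_E$ to both sides and invoking the preceding flatness produces a polynomial in $w$ that vanishes identically; all of its coefficients must therefore vanish, giving $(\partial^{\beta+\alpha} f)(z) = 0$ whenever $|\beta| \le i$ and $|\alpha| \le j$. Since every $\mu \in \N_0^d$ with $|\mu| \le i + j$ admits a componentwise decomposition $\mu = \beta + \alpha$ with $|\beta| \le i$ and $|\alpha| \le j$, I will conclude $(\partial^\mu f)(z) = 0$ for all such $\mu$, which is precisely $f \sim \bfz_{E,E}\,\,[[i+j,\,z]]$.

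The main work is in the reverse implication; the conceptual point driving it is that varying $v$ in the family $\Psi(v) = I + v$ simply translates the argument of $f$ by $v$, so $i$-th order flatness of $f_*$ at $I$ transparently records the extra $i$ orders of derivatives of $f$ beyond the $j$ orders already encoded in $f_*(I) = \Tay_z^j(f)$. No serious obstacle is anticipated; the only minor bookkeeping is the multi-index decomposition used at the end.
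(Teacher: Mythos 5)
Your proof is correct, and the $\Rightarrow$ direction coincides with the paper's (\lref{lem-drop-j-derivs} with $Q = I$). Your $\Leftarrow$ direction shares the paper's central device --- precomposing $f_*$ with the translation family $\Psi$ (which is the paper's $T_\bullet$, with $T_\bullet(w) = T_w$ and $T_w(x) = w+x$) to move the jet basepoint from $I$ back to $z$ --- but is otherwise a genuinely different and somewhat leaner argument. The paper first replaces $f$ by its Taylor polynomial $F := \Tay_z^{i+j}(f)$, transferring the hypothesis to $F_*$ via the identity $F_* = (F-f)_* + f_*$ together with \lref{lem-drop-j-derivs} applied to $F-f$, and then, in coordinates on $\scrp_{E,E}^{0,i+j}$, carries out an explicit computation to show $F = 0$. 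You skip that reduction entirely: you compose with the evaluation maps $e_w: V \to E$ to turn the order-$i$ flatness of $f_*$ at $I$ into, for every $w \in E$, order-$i$ flatness at $0_E$ of the explicit $E$-valued map $v \mapsto \sum_{|\alpha| \le j} \frac{w^\alpha}{\alpha!}(\partial^\alpha f)(v)$; differentiating in $v$ and using linear independence of the monomials $w^\alpha$ gives $(\partial^{\alpha+\beta}f)(z) = 0$ for all $|\alpha| \le j$, $|\beta| \le i$. Your route avoids the passage to $F$, the appeal to linearity of $\omega \mapsto \omega_*$, and the coefficient bookkeeping, at the modest cost of introducing the extra test family $\{e_w\}$; the paper's route keeps the final computation confined to a finite-dimensional polynomial space. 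Both proofs finish with the same observation that every $\mu \in \N_0^d$ with $|\mu| \le i+j$ splits as $\alpha + \beta$ with $|\alpha| \le j$ and $|\beta| \le i$.
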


\begin{proof}
By \lref{lem-drop-j-derivs} (with $Q$ replaced by $I$),
we get $\Rightarrow$.
Assume that $f_*\sim\bfz_{V,V}\,\,[[i,I]]$.
We wish to prove that $f\sim\bfz_{E,E}\,\,[[i+j,z]]$.

Let $F:=\Tay_z^{i+j}(f)$.
Then $f\sim F\,\,[[i+j,z]]$,
so it suffices to show that $F=\bfz_{E,E}$.
Let $Y:=\scrp_{E,E}^{0,i+j}$.
Then $F\in Y$ and $0_Y=\bfz_{E,E}$.
It therefore suffices to prove that $F=0_Y$.

Let $\phi:=F-f$.
Then $\phi_*=F_*-f_*$.
Since $F\sim f\,\,[[i+j,z]]$,
we conclude that $\phi\sim\bfz_{E,E}\,\,[[i+j,z]]$.
Then, by \lref{lem-drop-j-derivs} (with $Q$ replaced by $I$ and $f$ by $\phi$),
we see that $\phi_*\sim\bfz_{V,V}\,\,[[i,I]]$.
So, since $f_*\sim\bfz_{V,V}\,\,[[i,I]]$
and since $F_*=\phi_*+f_*$,
we get $F_*\sim\bfz_{V,V}\,\,[[i,I]]$.

For all $w\in E$, define $T_w:E\to E$ by $T_w(x)=w+x$;
then $T_w\in V$.
Also, $T_z=I$.
Define $T_\bullet:E\to V$ by~$T_\bullet(w)=T_w$.
Then $T_\bullet(z)=I$.
So, because $F_*\sim\bfz_{V,V}\,\,[[i,I]]$,
we get $F_*\circ T_\bullet\sim\bfz_{V,V}\circ T_\bullet\,\,[[i,z]]$.
Let $\Lambda:=F_*\circ T_\bullet$.
Then, since $\bfz_{V,V}\circ T_\bullet=\bfz_{E,V}$,
we get $\Lambda\sim\bfz_{E,V}\,\,[[i,z]]$.

Let $S:=\{\sigma\in\N_0^d~\hbox{s.t.}~|\sigma|\le i+j\}$.
Let $D:=\{1,\ldots,d\}$.
Let $C:=\R^{S\times D}$ denote the vector space of all functions $S\times D\to\R$.
For all $\gamma\in C$, for all~$\sigma\in S$, for all~$\delta\in D$,
we simplify notation by defining $\gamma_\delta^\sigma:=\gamma(\sigma,\delta)$.
Let $\varepsilon_1,\ldots\varepsilon_d$ be the standard basis of $E=\R^d$.
For all~$\gamma\in C$, let $P_\gamma:E\to E$ be defined by
$\displaystyle{P_\gamma(x)=\sum_{\delta\in D}\sum_{\sigma\in S}\,\gamma_\delta^\sigma x^\sigma\varepsilon_\delta}$;
then $P_\gamma\in Y$.
The map $\gamma\mapsto P_\gamma:C\to Y$ is a vector space isomorphism.
Let $c\in C$ satisfy $P_c=F$.
We wish to show that $c=0_C$.

For all $\sigma\in S$,
let $R_\sigma:=\{\rho\in\N_0^d~\hbox{s.t.}~\rho\le\sigma\}$.
For all $w,x\in E$,
\begin{eqnarray*}
(F\circ T_w)(x)&=&F(T_w(x))\,\,=\,\,F(w+x)\,\,=\,\,P_c(w+x)\\
&=&\sum_{\delta\in D}\,\,\sum_{\sigma\in S}\,\,c_\delta^\sigma(w+x)^\sigma\varepsilon_\delta\\
&=&\sum_{\delta\in D}\,\,\sum_{\sigma\in S}\,\,c_\delta^\sigma\sum_{\rho\in R_\sigma}\left(\!\!\!\begin{array}{c}\sigma\\\rho\end{array}\!\!\!\right)w^\rho x^{\sigma-\rho}\varepsilon_\delta.
\end{eqnarray*}
For all $w\in E$,
$\Lambda(w)=(F_*\circ T_\bullet)(w)=F_*(T_\bullet(w))=F_*(T_w)$.
For all~$\sigma\in S$,
let $R'_\sigma:=\{\rho\in R_\sigma~\hbox{s.t.}~|\sigma-\rho|\le j\}$.
Then, for all $w,x\in E$,
\begin{eqnarray*}
(\Lambda(w))(x)&=&(F_*(T_w))(x)\,\,=\,\,(\Tay_z^j(F\circ T_w))(x)\\
&=&\sum_{\delta\in D}\,\,\sum_{\sigma\in S}\,\,c_\delta^\sigma\sum_{\rho\in R'_\sigma}
\left(\!\!\!\begin{array}{c}\sigma\\\rho\end{array}\!\!\!\right)w^\rho x^{\sigma-\rho}\varepsilon_\delta.
\end{eqnarray*}

Let $\sigma_0\in S$ and $\delta_0\in D$ be given.
We wish to show that $c_{\delta_0}^{\sigma_0}=0$.

Let $A:=\{\alpha\in\N_0^d~\hbox{s.t.}~|\alpha|\le i\}$.
Let $B:=\{\beta\in\N_0^d~\hbox{s.t.}~|\beta|\le j\}$.
Then $A+B=\{\sigma\in\N_0^d~\hbox{s.t.}~|\sigma|\le i+j\}=S$.
So, since $\sigma_0\in S=A+B$,
choose $\alpha_0\in A$ and $\beta_0\in B$ such that $\sigma_0=\alpha_0+\beta_0$.
Then $\alpha_0\le\alpha_0+\beta_0=\sigma_0$, so $\alpha_0\in R_{\sigma_0}$.
Also, we have $|\sigma_0-\alpha_0|=|\beta_0|\le j$.
Then $\alpha_0\in R'_{\sigma_0}$.
Let $S':=\{\sigma\in S~\hbox{s.t.}~\alpha_0\in R'_\sigma\}$.
Then $\sigma_0\in S'$.
Let $\displaystyle{\xi:=\frac{(\partial^{\alpha_0}\Lambda)(z)}{\alpha_0!}\in V}$.
Then, for all $x\in E$, we have
\begin{eqnarray*}
\xi(x)\quad&=&\quad\sum_{\delta\in D}\,\,\sum_{\sigma\in S'}\,\,c_\delta^\sigma\left(\!\!\!\begin{array}{c}\sigma\\\alpha_0\end{array}\!\!\!\right)x^{\sigma-\alpha_0}\varepsilon_\delta\\
&=&\quad\sum_{\delta\in D}\,\,\,\sum_{\beta\in S'-\alpha_0}\,\,\,c_\delta^{\alpha_0+\beta}\left(\!\!\!\begin{array}{c}\alpha_0+\beta\\\alpha_0\end{array}\!\!\!\right)x^\beta\varepsilon_\delta.
\end{eqnarray*}
Since $\alpha_0\in A$, we have $|\alpha_0|\le i$.
So, as $\Lambda\sim\bfz_{E,V}\,\,[[i,z]]$,
we get $(\partial^{\alpha_0}\Lambda)(z)=0_V$.
Then $\xi=0_V=\bfz_{E,E}$.
Since $\alpha_0+\beta_0=\sigma_0\in S'$, we get $\beta_0\in S'-\alpha_0$.
Let $\displaystyle{u:=\frac{(\partial^{\beta_0}\xi)(z)}{\beta_0!}\in E}$.
Then
$$u\,\,=\,\,\sum_{\delta\in D}\,\,c_\delta^{\alpha_0+\beta_0}\left(\!\!\!\begin{array}{c}\alpha_0+\beta_0\\\alpha_0\end{array}\!\!\!\right)\varepsilon_\delta
\,\,=\,\,\sum_{\delta\in D}\,\,c_\delta^{\sigma_0}\left(\!\!\!\begin{array}{c}\sigma_0\\\alpha_0\end{array}\!\!\!\right)\varepsilon_\delta.$$
As $\xi=\bfz_{E,E}$, we see that $(\partial^{\beta_0}\xi)(z)=0_E$.
Then $u=0_E=0_d$.
Let $\pi:E\to\R$ denote projection onto the $\delta_0$ coordinate,
defined by $\pi(x_1,\ldots,x_d)=x_{\delta_0}$.
Then $c_{\delta_0}^{\sigma_0}\left(\!\!\!\begin{array}{c}\sigma_0\\\alpha_0\end{array}\!\!\!\right)=\pi(u)=\pi(0_d)=0$.
So, since~$\left(\!\!\!\begin{array}{c}\sigma_0\\\alpha_0\end{array}\!\!\!\right)\ne0$,
we conclude that $c_{\delta_0}^{\sigma_0}=0$, as desired.
\end{proof}

\section{Induced maps on frame bundles\wrlab{sect-induced-frame}}

Let $d\in\N$ and let $M$ and $N$ both be $d$-dimensional manifolds.

\begin{lem}\wrlab{lem-lose-j-derivs-from-ell}
Let $f,\phi\in D_O^\infty(M,N)$,
$i,j\in\N_0$, $q\in F^jM$, $p:=\pi_M^j(q)$.
Then:\qquad
${\bm(}\,f\sim\phi\,\,[[i+j,p]]\,{\bm)}\quad\Leftrightarrow\quad{\bm(}\,F^jf\sim F^j\phi\,\,[[i,q]]\,{\bm)}$.
\end{lem}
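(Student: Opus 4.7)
My plan is to pull the equivalence back to Euclidean space by a chart at $q$ and then invoke \lref{lem-add-drop-j-derivs} after an additive shift. Set $E:=\R^d$, $z:=0_d$, $V:=\scrp_{E,E}^{0,j}$, and choose $\lambda\in\scrr_M^0$ with $J_M^j\lambda=q$; then $\lambda(z)=p$. First I dispose of the degenerate cases: if $p\notin(\dom[f])\cap(\dom[\phi])$ or $f(p)\ne\phi(p)$, then both sides of the claimed equivalence fail, because $q\in(\dom[F^jf])\cap(\dom[F^j\phi])$ forces $p\in(\dom[f])\cap(\dom[\phi])$, and $\pi_N^j$-projecting $(F^jf)(q)=(F^j\phi)(q)$ forces $f(p)=\phi(p)$. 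So I assume $p\in(\dom[f])\cap(\dom[\phi])$ and $f(p)=\phi(p)$.

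Shrinking $\lambda$ so that $\lambda(\dom[\lambda])\subseteq(\dom[f])\cap(\dom[\phi])$, I set $\mu:=f\circ\lambda\in\scrr_N^0$ and define $\tilde f:=\mu^{-1}\circ f\circ\lambda$ and $\tilde\phi:=\mu^{-1}\circ\phi\circ\lambda$ in $D_O^\infty(E,E)$; by construction $\tilde f$ is a restriction of $\id_E$ and $\tilde\phi(z)=z$. Applying the pre- and post-composition chain-rule facts from \S\ref{sect-global} with the diffeomorphisms $\lambda,\mu$ and their inverses (in both directions), together with functoriality of $F^j$ and the identity $F^j\id_E=\id_{F^jE}$, converts the statement to
$$\id_E\sim\tilde\phi\,\,[[i+j,z]]\quad\Leftrightarrow\quad\id_{F^jE}\sim F^j\tilde\phi\,\,[[i,I_0]],$$
where $I_0:=J_E^j(\id_E|\dom[\lambda])\in F^jE$ satisfies $(F^j\lambda)(I_0)=q$.

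Next, the $C^\infty$-bijection $J_E^j\nu\mapsto\Tay_z^j\nu:F^jE\to\scrq$ (composed with any $\kappa\in\scrk$) furnishes a chart on $F^jE$ under which $I_0$ corresponds to $I:=\id_E\in V$ and $F^j\tilde\phi$ corresponds to the restriction of $\tilde\phi_*$ (from \S\ref{sect-induced-polynomials}) to a neighborhood of $I$: writing $P:=\Tay_z^j\nu$, the chain-rule fact gives $\Tay_z^j(\tilde\phi\circ\nu)=\Tay_z^j(\tilde\phi\circ P)=\tilde\phi_*(P)$. Because $\scrq$ is open in $V$ and $\kappa$ is linear, this reduces the right-hand side of the displayed equivalence to $\id_V\sim\tilde\phi_*\,\,[[i,I]]$.

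Finally, let $h:=\tilde\phi-\id_E\in C_O^\infty(E,E)$; note $z\in\dom[h]$. Linearity of the Taylor map together with $\Tay_z^j(\id_E\circ P)=P$ for $P\in V$ yields $h_*=\tilde\phi_*-\id_V$, and consequently $\id_E\sim\tilde\phi\,\,[[i+j,z]]\Leftrightarrow h\sim\bfz_{E,E}\,\,[[i+j,z]]$ and $\id_V\sim\tilde\phi_*\,\,[[i,I]]\Leftrightarrow h_*\sim\bfz_{V,V}\,\,[[i,I]]$. These last two conditions are equivalent by \lref{lem-add-drop-j-derivs} applied to $h$, closing the chain. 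The main obstacle is the second paragraph's chart-based reduction, where domain bookkeeping must be handled with care; once the problem has been pulled back to $V$, the substantive work is already packaged in \lref{lem-add-drop-j-derivs}.
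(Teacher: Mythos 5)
Your argument follows essentially the same route as the paper's: reduce to Euclidean coordinates via a reverse chart at $p$, read $F^j$ through the Taylor-polynomial chart on $F^jE$ so that $F^j\omega$ becomes $\omega_*$, and shift additively so that \lref{lem-add-drop-j-derivs} finishes the job. Your normalization $\widetilde{\phi}:=\mu^{-1}\circ\phi\circ\lambda$ with $\mu:=f\circ\lambda$ plays the role of the paper's $\psi:=\lambda^{-1}\circ\phi^{-1}\circ f\circ\lambda$ (indeed $\widetilde{\phi}=\psi^{-1}$), and your one-line computation $\Tay_z^j(\widetilde{\phi}\circ\nu)=\widetilde{\phi}_*(\Tay_z^j\nu)$ is exactly the content of the paper's Claim~1, stated more tersely.

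The one genuine gap is the case $j=0$, which your argument does not cover and which must be handled separately. Section~\ref{sect-induced-polynomials} is set up under the standing hypothesis $j\in\N$, so $\omega_*$ and \lref{lem-add-drop-j-derivs} are only available for $j\ge1$; moreover your identification $I:=\id_E\in V$ fails when $V=\scrp_{E,E}^{0,0}$, since that space contains only constant maps. The paper dispatches $j=0$ at the outset by identifying $F^0f$ with $f$ and $F^0\phi$ with $\phi$, after which the equivalence is tautological, and only then assumes $j\in\N$; you should add the same preliminary reduction.
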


\begin{proof}
If $j=0$, then, by identifying $F^jf$ with $f$ and $F^j\phi$ with $\phi$, the result follows.
We therefore assume that $j\in\N$.
We also assume
$${\bm(}\,p\in\dom[f]\,{\bm)}\quad\hbox{and}\quad{\bm(}\,p\in\dom[\phi]\,{\bm)}\quad\hbox{and}\quad{\bm(}\,f(p)=\phi(p)\,{\bm)};$$
otherwise, both ${\bm(}\,f\sim\phi\,\,[[i+j,p]]\,{\bm)}$
and ${\bm(}\,F^jf\sim F^j\phi\,\,[[i,q]]\,{\bm)}$ are false.

Let $h:=\phi^{-1}\circ f\in D_O^\infty(M,M)$.
Let $\iota:=\id_M:M\to M$.
Then $h(p)=p=\iota(p)$ and $F^jh=(F^j\phi)^{-1}\circ(F^jf)$.
It suffices to prove:
$${\bm(}\,h\sim\iota\,[[i+j,p]]\,{\bm)}\quad\Leftrightarrow\quad{\bm(}\,F^jh\sim F^j\iota\,[[i,q]]\,{\bm)}.$$

Let $E:=\R^d$, $z:=0_d$, $I:=\id_E:E\to E$, $\sigma:=J_E^jI\in F^jE$.
Then $\pi_E^j(\sigma)=\pi_E^j(J_E^jI)=I(z)=z$.
Choose $\lambda\in\scrr_M^0$ such that $q=J_M^j\lambda$.
Then $(F^j\lambda)(\sigma)=(F^j\lambda)(J_E^jI)=J_M^j(\lambda\circ I)=J_M^j\lambda=q$.
Also, we have $p=\pi_M^j(q)=\pi_M^j(J_M^j\lambda)=\lambda(z)$.
Let $\psi:=\lambda^{-1}\circ h\circ\lambda\in D_O^\infty(E,E)$.
Then $\psi(z)\!=\!z\!=\!I(z)$
and $F^j\psi\!=\!(F^j\lambda)^{-1}\circ(F^jh)\circ(F^j\lambda)$.
It suffices to prove:
$${\bm(}\,\psi\sim I\,[[i+j,z]]\,{\bf)}\quad\Leftrightarrow\quad
{\bf(}\,F^j\psi\sim F^jI\,[[i,\sigma]]\,{\bf)}.$$

Let $V:=\scrp_{E,E}^{0,j}$.
Let $\det:\scrp_{E,E}^1\to\R$ denote the determinant function.
Let $V^\times:=\{P\in V\,|\,\det(\lin_z^P)\ne0\}$.
Define $T:F^jE\to V^\times$ by~$T(J_E^j\lambda)=\Tay_z^j(\lambda)$.
Then $T:F^jE\to V^\times$ is a $C^\infty$ diffeomorphism.

Let $U:=\dom[\psi]$.
Then $U$ is an open neighborhood in $E$ of $z$.
Let $\chi:=\psi-I:U\to E$.
Define $\zeta:V\to E$ by $\zeta(P)=P(z)$.
For all~$\omega\in C_O^\infty(E,E)$,
define $\omega_*:\zeta^*(\dom[\omega])\to V$ by $\omega_*(P)=\Tay_z^j(\omega\circ P)$;
then $\omega_*\in C_O^\infty(V,V)$.
Since $\chi=\psi-I$, we get $\chi_*=\psi_*-I_*$.

{\it Claim 1:} Let $\omega\in D_O^\infty(E,E)$.
Then $\omega_*\circ T=T\circ(F^j\omega)$.
{\it Proof of~Claim~1:}
Because we have $\im[F^j\omega]\subseteq F^jE=\dom[T]$,
we see that $\dom[T\circ(F^j\omega)]=\dom[F^j\omega]$.
For all $\lambda\in\scrr_E^0$,
\begin{eqnarray*}
(\zeta\circ T)(J_E^j\lambda)&=&\zeta(T(J_E^j\lambda))\,\,=\,\,\zeta(\Tay_z^j(\lambda))\\
&=&(\Tay_z^j(\lambda))(z)\,\,=\,\,\lambda(z)\,\,=\,\,\pi_E^j(J_E^j\lambda).
\end{eqnarray*}
Then $\zeta\circ T=\pi_E^j$.
Let $Q:=(\pi_E^j)^*(\dom[\omega])$.
Then $Q\subseteq\dom[\pi_E^j]$.
Also,
\begin{eqnarray*}
\dom[\omega_*\circ T]&=&T^*(\dom[\omega_*])\,\,=\,\,T^*(\zeta^*(\dom[\omega]))\\
&=&(\pi_E^j)^*(\dom[\omega])\,\,=\,\,Q.
\end{eqnarray*}
Also, $\dom[T\circ(F^j\omega)]=\dom[F^j\omega]=(\pi_E^j)^*(\dom[\omega])=Q$.
Let $q\in Q$ be given.
We wish to show that $\omega_*(T(q))=T((F^j\omega)(q))$.

Since $q\in Q\subseteq\dom[\pi_E^j]=F^jE$,
choose $\mu\in\scrr_E^0$ such that $q=J_E^j\mu$.
Let $P:=\Tay_z^j(\mu)$.
Then $T(q)=T(J_E^j\mu)=\Tay_z^j(\mu)=P$.
Then $\omega_*(T(q))=\omega_*(P)=\Tay_z^j(\omega\circ P)$.
Also, $\pi_E^j(q)=\pi_E^j(J_E^j\mu)=\mu(z)$.
Then $\mu(z)=\pi_E^j(q)\in\pi_E^j(Q)=\pi_E^j((\pi_E^j)^*(\dom[\omega]))\subseteq\dom[\omega]$.

We have $(F^j\omega)(q)=(F^j\omega)(J_E^j\mu)=J_E^j(\omega\circ\mu)$.
Moreover, we have $T(J_E^j(\omega\circ\mu))=\Tay_z^j(\omega\circ\mu)$.
As $P=\Tay_z^j(\mu)$,
we get $P\sim\mu\,\,[[j,z]]$.
Then $\omega\circ P\sim\omega\circ\mu\,\,[[j,z]]$,
so $\Tay_z^j(\omega\circ P)=\Tay_z^j(\omega\circ\mu)$.
Then
\begin{eqnarray*}
\omega_*(T(q))&=&\Tay_z^j(\omega\circ P)\,\,=\,\,\Tay_z^j(\omega\circ\mu)\\
&=&T(J_E^j(\omega\circ\mu))\,\,=\,\,T((F^j\omega)(q)),
\end{eqnarray*}
as desired.
{\it End of proof of Claim 1.}

By Claim 1, $\psi_*\circ T=T\circ(F^j\psi)$ and $I_*\circ T=T\circ(F^jI)$.
Also, we have $T(\sigma)=T(J_E^jI)=\Tay_z^j(I)=I$.
Then
$${\bm(}\,\psi_*\,\sim\,I_*\,\,[[i,I]]\,{\bm)}
\quad\Leftrightarrow\quad
{\bm(}\,F^j\psi\sim F^jI\,\,[[i,\sigma]]\,{\bm)}.$$

By \lref{lem-add-drop-j-derivs},
{\bf(} $\chi\sim\bfz_{E,E}\,\,[[i+j,z]]$ {\bf)} $\Leftrightarrow$
{\bf(} $\chi_*\sim\bfz_{V,V}\,[[i,I]]$ {\bf)}.
So, since $\chi=\psi-I$ and since $\chi_*=\psi_*-I_*$, we get
$${\bm(}\,\psi\sim I\,\,[[i+j,z]]\,{\bm)}\quad\Leftrightarrow\quad
{\bm(}\,\psi_*\sim I_*\,[[i,I]]\,{\bm)}.$$

Putting this together, we have shown:
\begin{eqnarray*}
{\bm(}\,\psi\sim I\,\,[[i+j,z]]\,{\bm)}&\Leftrightarrow&{\bm(}\,\psi_*\sim I_*\,[[i,I]]\,{\bm)}\\
&\Leftrightarrow&{\bm(}\,F^j\psi\sim F^jI\,\,[[i,\sigma]]\,{\bm),}
\end{eqnarray*}
as desired.
\end{proof}

The $i=0$ special case of $\Leftarrow$
of \lref{lem-lose-j-derivs-from-ell}
is often useful, and it admits a simple proof,
so we present it separately:

\begin{lem}\wrlab{lem-stab-above-highstab-below}
Let $f,\phi\in D_O^\infty(M,N)$.
Let $j\in\N_0$.
Let $q\in F^jM$.
Assume that $q\in(\dom[F^jf])\cap(\dom[F^j\phi])$
and that $(F^jf)(q)=(F^j\phi)(q)$.
Let $p:=\pi_M^j(q)$.
Then $f\sim\phi\,\,[[j,p]]$.
\end{lem}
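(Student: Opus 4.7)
The plan is to unwind the definition of $F^jf$ applied to $q$, observe that the hypothesis translates to an order $j$ agreement statement for $f\circ\lambda$ and $\phi\circ\lambda$ at $z:=0_d$, and then compose on the right with $\lambda^{-1}$ via the Chain Rule formulation that is stated earlier in the excerpt.

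First I would choose $\lambda\in\scrr_M^0$ with $J_M^j\lambda=q$. Then by definition of $\pi_M^j$ we have $\lambda(z)=p$, and the hypotheses $q\in\dom[F^jf]\cap\dom[F^j\phi]$ mean exactly that $p\in\dom[f]\cap\dom[\phi]$; after shrinking the domain of $\lambda$ we may assume $\im[\lambda]\subseteq\dom[f]\cap\dom[\phi]$, so that $f\circ\lambda,\phi\circ\lambda\in\scrr_N^0$. Next, from the definition of $F^j$ acting on diffeomorphisms we have
\[
(F^jf)(q)\,=\,(F^jf)(J_M^j\lambda)\,=\,J_N^j(f\circ\lambda)\qquad\hbox{and}\qquad(F^j\phi)(q)\,=\,J_N^j(\phi\circ\lambda).
\]
The assumed equality $(F^jf)(q)=(F^j\phi)(q)$ is thus the statement that $f\circ\lambda$ and $\phi\circ\lambda$ lie in the same $J_z^j(\,\cdot\,,\scrr_N^0)$-equivalence class, which is the very definition of $f\circ\lambda\sim\phi\circ\lambda\,\,[[j,z]]$.

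Finally, I would apply the right-composition form of the Chain Rule stated earlier in the excerpt (``if $f\sim\phi\,\,[[k,\alpha(p)]]$ then $f\circ\alpha\sim\phi\circ\alpha\,\,[[k,p]]$'') with $\alpha:=\lambda^{-1}\in C_O^\infty(N,M)$ and the point $p$ of that statement equal to the present $p$, noting that $\alpha(p)=\lambda^{-1}(p)=z$. Applied to $f\circ\lambda\sim\phi\circ\lambda\,\,[[j,z]]$ this yields
\[
(f\circ\lambda)\circ\lambda^{-1}\,\sim\,(\phi\circ\lambda)\circ\lambda^{-1}\qquad[[j,p]].
\]
Since $(f\circ\lambda)\circ\lambda^{-1}=f$ and $(\phi\circ\lambda)\circ\lambda^{-1}=\phi$ as elements of $C_O^\infty$ (on a neighborhood of $p$, which is all the relation $\sim\,[[j,p]]$ depends on), we conclude $f\sim\phi\,\,[[j,p]]$, as required.

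There is essentially no obstacle: the argument is a direct unwinding of definitions plus one invocation of the Chain Rule. The only thing to be careful about is bookkeeping on domains, namely ensuring that after possibly shrinking $\dom[\lambda]$ the composite $f\circ\lambda$ still represents the same $j$-jet $J_M^j\lambda$ and the same image under $F^jf$, which is automatic because both $\pi_M^j(q)$ and $(F^jf)(q)$ depend only on the germ of $\lambda$ at $z$.
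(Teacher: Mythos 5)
Your proof is correct and is essentially the same argument as the paper's: choose $\lambda$ with $q=J_M^j\lambda$, unwind the definitions to get $f\circ\lambda\sim\phi\circ\lambda\,\,[[j,0_d]]$, and then right-compose with $\lambda^{-1}$ using the stated Chain Rule property of $\sim$ to land at $f\sim\phi\,\,[[j,p]]$. The extra remark about shrinking $\dom[\lambda]$ is harmless bookkeeping that the paper simply leaves implicit.
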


\begin{proof}
Since $q\in F^jM$, choose $\lambda\in\scrr_M^0$ such that $q=J_M^j\lambda$.
Then $(F^jf)(q)=J_N^j(f\circ\lambda)$ and $(F^j\phi)(q)=J_N^j(\phi\circ\lambda)$
and $\pi_M^j(q)=\lambda(0_d)$.

We have $J_N^j(f\circ\lambda)=(F^jf)(q)=(F^j\phi)(q)=J_N^j(\phi\circ\lambda)$.
It follows that $f\circ\lambda\sim\phi\circ\lambda\,\,[[j,0_d]]$.
Since $p=\pi_M^j(q)=\lambda(0_d)$, we get $\lambda^{-1}(p)=0_d$.
Then $f\circ\lambda\circ\lambda^{-1}\sim\phi\circ\lambda\circ\lambda^{-1}\,\,[[j,p]]$.
Then $f\sim\phi\,\,[[j,p]]$.
\end{proof}

\section{Iterated frame bundles contain frame bundles\wrlab{sect-iter-frame}}

Let $X$ be a manifold.
Let $d:=\dim X$, $E:=\R^d$, $z:=0_d$, $j\in\N_0$ and $m:=\dim(\scrp_{E,E}^{0,j})$.
Let $\kappa:\scrp_{E,E}^{0,j}\to\R^m$ be a fixed vector space isomorphism.
Recall (from \secref{sect-global}): $C_\kappa:F^jE\to\R^m$ is defined by
$$\forall\nu\in\scrr_E^0,\qquad C_\kappa(J_E^j\nu)\,\,=\,\,\kappa(\Tay_z^j(\nu)).$$
Then $C_\kappa\in\scrc_{F^jE}$.
Let $I:=\id_E$.
Let $q_0:=J_E^jI\in F^jE$.
Then we have $\pi_E^j(q_0)=\pi_E^j(J_E^jI)=I(z)=z$.
Let the function $B_\kappa:F^jE\to\R^m$ be defined by~$B_\kappa(q)=[C_\kappa(q)]-[C_\kappa(q_0)]$;
then $B_\kappa\in\scrc_{F^jE}$ and $B_\kappa(q_0)=0_m$.
Let $R_\kappa:=B_\kappa^{-1}$; then $R_\kappa\in\scrr_{F^jE}^0$ and $R_\kappa(0_m)=q_0$.
For all~$\lambda\in\scrr_X^0$,
$$R_\kappa(0_m)\,\,=\,\,q_0\,\,\in\,\,(\pi_E^j)^*(z)\,\,\subseteq\,\,(\pi_E^j)^*(\dom[\lambda])\,\,=\,\,\dom[F^j\lambda],$$
and we define $R_\kappa^\lambda:=(F^j\lambda)\circ R_\kappa\in\scrr_{F^jX}^0$.
Let $i\in\N_0$.

\begin{lem}\wrlab{lem-Rkappa-basic-fact}
For all $\lambda,\mu\in\scrr_X^0$, we have:
$${\bm(}\,\,\lambda\sim\mu\,\,[[i+j,z]]\,\,{\bm)}\,\,\Leftrightarrow\,\,{\bm(}\,\,R_\kappa^\lambda\sim R_\kappa^\mu\,\,[[i,0_m]]\,\,{\bm)}.$$
\end{lem}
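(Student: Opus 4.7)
The plan is to combine \lref{lem-lose-j-derivs-from-ell} with two applications of the Chain Rule (from \secref{sect-global}) to translate the desired equivalence across the diffeomorphism $R_\kappa$.

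First I would apply \lref{lem-lose-j-derivs-from-ell} with $M$ replaced by $E$, $N$ replaced by $X$, $f$ replaced by $\lambda$, $\phi$ replaced by $\mu$, and $q$ replaced by $q_0$. Since $q_0=J_E^jI$ satisfies $\pi_E^j(q_0)=I(z)=z$, this yields
$$\lambda\sim\mu\,\,[[i+j,z]]\quad\Leftrightarrow\quad F^j\lambda\sim F^j\mu\,\,[[i,q_0]].$$
It therefore suffices to prove
$$F^j\lambda\sim F^j\mu\,\,[[i,q_0]]\quad\Leftrightarrow\quad R_\kappa^\lambda\sim R_\kappa^\mu\,\,[[i,0_m]].$$

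For the forward implication, apply the Chain Rule with $\alpha:=R_\kappa$ and $p:=0_m$; since $\alpha(p)=R_\kappa(0_m)=q_0$, the hypothesis yields $(F^j\lambda)\circ R_\kappa\sim(F^j\mu)\circ R_\kappa\,\,[[i,0_m]]$, which is exactly $R_\kappa^\lambda\sim R_\kappa^\mu\,\,[[i,0_m]]$. For the reverse implication, apply the Chain Rule with $\alpha:=B_\kappa$ and $p:=q_0$; since $\alpha(p)=B_\kappa(q_0)=0_m$, the hypothesis yields $R_\kappa^\lambda\circ B_\kappa\sim R_\kappa^\mu\circ B_\kappa\,\,[[i,q_0]]$. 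Because $R_\kappa\circ B_\kappa=\id$ on the open neighborhood $\dom[B_\kappa]$ of $q_0$ in $F^jE$, we have $R_\kappa^\lambda\circ B_\kappa=F^j\lambda$ and $R_\kappa^\mu\circ B_\kappa=F^j\mu$ on a neighborhood of $q_0$, so $F^j\lambda\sim F^j\mu\,\,[[i,q_0]]$.

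The substantive content sits entirely in the first step, the invocation of \lref{lem-lose-j-derivs-from-ell}, which already does the heavy lifting of trading $j$ derivatives for the frame-bundle prolongation. The Chain Rule step is essentially bureaucratic, merely shifting the base point of the jet equivalence along the chart $R_\kappa$. The only condition to verify is that $q_0\in\dom[F^j\lambda]\cap\dom[F^j\mu]$, which is immediate from $z=\pi_E^j(q_0)\in\dom[\lambda]\cap\dom[\mu]$; so I do not foresee a genuine obstacle.
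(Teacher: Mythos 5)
Your proposal is correct and follows essentially the same route as the paper: invoke \lref{lem-lose-j-derivs-from-ell} (with $M\leadsto E$, $N\leadsto X$, $q\leadsto q_0$, $p\leadsto z$) to trade $j$ derivatives for a jump up the frame-bundle tower, then transport the jet condition through the reverse chart $R_\kappa$ using $R_\kappa(0_m)=q_0$. The only difference is cosmetic: the paper asserts the $R_\kappa$ step directly from the definitions, while you spell it out as two Chain Rule applications (with $\alpha=R_\kappa$ and $\alpha=B_\kappa$), which is a harmless elaboration of the same observation.
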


\begin{proof}
For all $\lambda,\mu\in\scrr_X^0$, because of the definitions above,
we have both $R_\kappa^\lambda=(F^j\lambda)\circ R_\kappa$ and $R_\kappa^\mu=(F^j\mu)\circ R_\kappa$;
so, since $R_\kappa(0_m)=q_0$,
$${\bm(}\,F^j\lambda\sim F^j\mu\,\,[[i,q_0]]\,{\bm)}\,\,\Leftrightarrow\,\,
{\bm(}\,\,R_\kappa^\lambda\sim R_\kappa^\mu\,\,[[i,0_m]]\,\,{\bm)}.$$

By \lref{lem-lose-j-derivs-from-ell} (with $M$ replaced by $E$, $N$ by $X$, $q$ by $q_0$ and $p$ by~$z$),
we conclude that: for all $\lambda,\mu\in\scrr_X^0$,
$${\bm(}\,\,\lambda\sim\mu\,\,[[i+j,z]]\,\,{\bm)}\,\,\Leftrightarrow\,\,{\bm(}\,\,F^j\lambda\sim F^j\mu\,\,[[i,q_0]]\,\,{\bm)}.$$
Putting this together, for all $\lambda,\mu\in\scrr_X^0$,
\begin{eqnarray*}
{\bm(}\,\,\lambda\sim\mu\,\,[[i+j,z]]\,\,{\bm)}&\Leftrightarrow&{\bm(}\,\,F^j\lambda\sim F^j\mu\,\,[[i,q_0]]\,\,{\bm)}\\
&\Leftrightarrow&{\bm(}\,\,R_\kappa^\lambda\sim R_\kappa^\mu\,\,[[i,0_m]]\,\,{\bm)},
\end{eqnarray*}
as desired.
\end{proof}

\begin{lem}\wrlab{lem-natural-map}
There exists a unique function $\Phi:F^{i+j}X\to F^i(F^jX)$
satisfying the condition: $\forall\lambda\in\scrr_X^0$, $\Phi(J_X^{i+j}\lambda)=J_{F^jX}^iR_\kappa^\lambda$.
\end{lem}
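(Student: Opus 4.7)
The plan is to build $\Phi$ directly from the prescribed formula and then verify that it is a well-defined function; both existence and uniqueness will reduce to a routine check using \lref{lem-Rkappa-basic-fact}.

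First I would address uniqueness. By definition, every element of $F^{i+j}X$ has the form $J_X^{i+j}\lambda$ for some $\lambda\in\scrr_X^0$. Hence any two functions $\Phi,\Phi':F^{i+j}X\to F^i(F^jX)$ satisfying the displayed condition must agree on every element of their common domain. So uniqueness is immediate once existence is established.

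For existence, I would attempt to define $\Phi$ by choosing, for each $q\in F^{i+j}X$, some $\lambda\in\scrr_X^0$ with $q=J_X^{i+j}\lambda$ and setting $\Phi(q):=J_{F^jX}^iR_\kappa^\lambda$. The one nontrivial point is that this prescription is independent of the choice of $\lambda$. Suppose $\lambda,\mu\in\scrr_X^0$ both satisfy $J_X^{i+j}\lambda=J_X^{i+j}\mu$. Unwinding the definition of $J_X^{i+j}(\cdot)=J_z^{i+j}(\cdot,\scrr_X^0)$, this is precisely the statement that $\lambda\sim\mu\,\,[[i+j,z]]$. Then \lref{lem-Rkappa-basic-fact} converts this into $R_\kappa^\lambda\sim R_\kappa^\mu\,\,[[i,0_m]]$, which by definition of $J_{F^jX}^i(\cdot)$ gives $J_{F^jX}^iR_\kappa^\lambda=J_{F^jX}^iR_\kappa^\mu$. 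Hence $\Phi(q)$ is well-defined.

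The only other thing worth mentioning is that the output lies in the intended codomain: since $R_\kappa^\lambda\in\scrr_{F^jX}^0$ (noted just before \lref{lem-Rkappa-basic-fact}), we do have $J_{F^jX}^iR_\kappa^\lambda\in F^i(F^jX)$. There is no real obstacle here; the proof is essentially a single application of \lref{lem-Rkappa-basic-fact} once one notices that both ``$J_X^{i+j}\lambda=J_X^{i+j}\mu$'' and ``$J_{F^jX}^iR_\kappa^\lambda=J_{F^jX}^iR_\kappa^\mu$'' are just the two sides of that lemma restated as equalities of equivalence classes.
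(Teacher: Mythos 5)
Your proof is correct and matches the paper's (one-line) proof: the paper simply cites the $\Rightarrow$ direction of \lref{lem-Rkappa-basic-fact}, which is precisely the well-definedness check you carry out in detail, with uniqueness being immediate since every element of $F^{i+j}X$ is of the form $J_X^{i+j}\lambda$.
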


\begin{proof}
This follows from $\Rightarrow$ of \lref{lem-Rkappa-basic-fact}.
\end{proof}

\begin{lem}\wrlab{lem-inj-of-natural-map}
Let the function $\Phi:F^{i+j}X\to F^i(F^jX)$ satisfy the condition:
$\forall\lambda\in\scrr_X^0$, $\Phi(J_X^{i+j}\lambda)=J_{F^jX}^iR_\kappa^\lambda$.
Then $\Phi$ is injective.
\end{lem}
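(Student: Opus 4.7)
The plan is to extract injectivity of $\Phi$ directly from the $\Leftarrow$ direction of \lref{lem-Rkappa-basic-fact}, just as the existence part of \lref{lem-natural-map} used the $\Rightarrow$ direction. Concretely, suppose $q_1, q_2 \in F^{i+j}X$ satisfy $\Phi(q_1) = \Phi(q_2)$. Since every element of $F^{i+j}X$ is of the form $J_X^{i+j}\lambda$ for some $\lambda \in \scrr_X^0$, I would choose $\lambda, \mu \in \scrr_X^0$ with $q_1 = J_X^{i+j}\lambda$ and $q_2 = J_X^{i+j}\mu$.

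Next, unpack the hypothesis using the defining condition of $\Phi$: the equality $\Phi(J_X^{i+j}\lambda) = \Phi(J_X^{i+j}\mu)$ becomes $J_{F^jX}^i R_\kappa^\lambda = J_{F^jX}^i R_\kappa^\mu$. By the definition of the jet equivalence class $J_{F^jX}^i(\cdot)$, this equality is precisely the statement $R_\kappa^\lambda \sim R_\kappa^\mu \,\,[[i, 0_m]]$.

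Now apply $\Leftarrow$ of \lref{lem-Rkappa-basic-fact} (with the same $\kappa$ and the same $\lambda, \mu$) to conclude $\lambda \sim \mu\,\,[[i+j, z]]$. This means $\lambda$ and $\mu$ determine the same element of $F^{i+j}X$, i.e., $J_X^{i+j}\lambda = J_X^{i+j}\mu$, so $q_1 = q_2$, which is exactly injectivity.

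There is no real obstacle here; the substantive content was already done in \lref{lem-Rkappa-basic-fact}, and this lemma is the clean biconditional upgrade of \lref{lem-natural-map}. The only minor care needed is to ensure that the representatives $\lambda, \mu$ indeed exist in $\scrr_X^0$ (immediate from the definition $F^{i+j}X = \{J_X^{i+j}\lambda \mid \lambda \in \scrr_X^0\}$) and to observe that $\Phi$'s value on $J_X^{i+j}\lambda$ depends only on the equivalence class, not on the chosen representative — but this well-definedness is already part of the hypothesis that $\Phi$ is a function satisfying the stated condition, so it need not be reverified.
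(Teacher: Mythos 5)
Your argument is correct and is exactly the approach the paper takes: the paper's proof is the one-line remark that the result follows from $\Leftarrow$ of \lref{lem-Rkappa-basic-fact}, and your proposal simply unpacks that implication in detail. Nothing further is needed.
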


\begin{proof}
This follows from $\Leftarrow$ of \lref{lem-Rkappa-basic-fact}.
\end{proof}

\section{Naturality of inclusion in iterated frame bundles\wrlab{sect-nat-incl}}

Let $d\in\N$.
Let $i,j\in\N_0$.
Let $E:=\R^d$.
Let $m:=\dim(\scrp_{E,E}^{0,j})$,
and let $\kappa:\scrp_{E,E}^{0,j}\to\R^m$ be a fixed vector space isomorphism.
Define $R_\kappa$ as in~\secref{sect-iter-frame}.
For any $d$-dimensional manifold $X$,
for any $\lambda\in\scrr_X^0$,
define $R_\kappa^\lambda\in\scrr_{F^jX}^0$ as in~\secref{sect-iter-frame}.
Following \lref{lem-natural-map},
for every $d$-dimensional manifold $X$,
let $\Phi_{X,\kappa}^{i,j}:F^{i+j}X\to F^i(F^jX)$ be the unique function satisfying the condition:
$\forall\lambda\in\scrr_X^0$, $\Phi_{X,\kappa}^{i,j}(J_X^{i+j}\lambda)=J_{F^jX}^iR_\kappa^\lambda$.

With $i$, $j$ and $\kappa$ fixed, we show that $\Phi_{X,\kappa}^{i,j}$ is natural in $X$:

\begin{lem}\wrlab{lem-naturality}
Let $M$ and $N$ both be $d$-dimensional manifolds,
and let $f\in D_O^\infty(M,N)$.
Then $(F^i(F^jf))\circ\Phi_{M,\kappa}^{i,j}=\Phi_{N,\kappa}^{i,j}\circ(F^{i+j}f)$.
\end{lem}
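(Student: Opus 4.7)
The plan is to verify the identity pointwise by picking an arbitrary element of $F^{i+j}M$, writing it as $J_M^{i+j}\lambda$ for some $\lambda\in\scrr_M^0$, and chasing the two composites through the definitions. Since both $\Phi_{M,\kappa}^{i,j}$ and $\Phi_{N,\kappa}^{i,j}$ are total functions on their respective $(i{+}j)$th order frame bundles, while $F^{i+j}f$ and $F^i(F^jf)$ are defined precisely on the $\pi$-preimages of $\dom[f]$ and $\dom[F^jf]$, the first small check will be that the two sides have the same domain: this reduces to the fact that $\dom[F^jf]=(\pi_M^j)^*(\dom[f])$, so a jet $J_M^{i+j}\lambda$ lies in $\dom[F^{i+j}f]$ iff $\lambda(z)\in\dom[f]$ iff $R_\kappa^\lambda(0_m)=q_0$ maps under $F^j\lambda$ into $\dom[F^jf]$, iff $J_{F^jM}^iR_\kappa^\lambda\in\dom[F^i(F^jf)]$.

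The core computation is then the following. Fix $\lambda\in\scrr_M^0$ with $\lambda(z)\in\dom[f]$. On one hand, by the definitions,
\[
\bigl(F^i(F^jf)\bigr)\bigl(\Phi_{M,\kappa}^{i,j}(J_M^{i+j}\lambda)\bigr)\,=\,\bigl(F^i(F^jf)\bigr)(J_{F^jM}^iR_\kappa^\lambda)\,=\,J_{F^jN}^i\bigl((F^jf)\circ R_\kappa^\lambda\bigr).
\]
On the other hand,
\[
\Phi_{N,\kappa}^{i,j}\bigl((F^{i+j}f)(J_M^{i+j}\lambda)\bigr)\,=\,\Phi_{N,\kappa}^{i,j}(J_N^{i+j}(f\circ\lambda))\,=\,J_{F^jN}^iR_\kappa^{f\circ\lambda}.
\]
So it suffices to show that the two maps in $\scrr_{F^jN}^0$ appearing inside the jet symbols are in fact equal (not just agreeing to order $i$). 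This is where the functoriality of $F^j$ enters: unwinding the definitions gives $R_\kappa^\lambda=(F^j\lambda)\circ R_\kappa$ and $R_\kappa^{f\circ\lambda}=(F^j(f\circ\lambda))\circ R_\kappa$, so
\[
(F^jf)\circ R_\kappa^\lambda\,=\,(F^jf)\circ(F^j\lambda)\circ R_\kappa\,=\,F^j(f\circ\lambda)\circ R_\kappa\,=\,R_\kappa^{f\circ\lambda},
\]
using the functoriality identity $(F^jf)\circ(F^j\lambda)=F^j(f\circ\lambda)$ recorded in \secref{sect-global}. The two displayed expressions for the left and right sides now coincide.

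Since $\lambda\in\scrr_M^0$ was arbitrary with $\lambda(z)\in\dom[f]$, and every element of $\dom[F^{i+j}f]$ has the form $J_M^{i+j}\lambda$ for such a $\lambda$, the two compositions agree on a common domain, proving the naturality identity. The only conceptual step is the application of functoriality of $F^j$; everything else is bookkeeping of the defining formulas for $\Phi_{X,\kappa}^{i,j}$ and $R_\kappa^\lambda$. The step most prone to error is simply keeping the domains straight, in particular being sure that $R_\kappa^\lambda$ and $R_\kappa^{f\circ\lambda}$ are defined on the same neighborhood of $0_m$ so that the equality of maps (not merely jets) makes literal sense; this is immediate because $\dom[R_\kappa^{f\circ\lambda}]=\dom[R_\kappa]\cap R_\kappa^{-1}(\dom[F^j(f\circ\lambda)])$ coincides with $\dom[(F^jf)\circ R_\kappa^\lambda]=\dom[R_\kappa]\cap R_\kappa^{-1}((F^j\lambda)^{-1}(\dom[F^jf]))$, and these two conditions on $R_\kappa$-values match by functoriality.
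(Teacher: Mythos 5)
Your proposal is correct and takes essentially the same route as the paper: both check that the two composites have the same domain via the projection formulas, then evaluate on a jet $J_M^{i+j}\lambda$ (the paper calls it $J_M^{i+j}\mu$) and reduce to the identity $(F^jf)\circ R_\kappa^\lambda=R_\kappa^{f\circ\lambda}$, which is exactly functoriality of $F^j$. The only blemish is the garbled clause ``$R_\kappa^\lambda(0_m)=q_0$ maps under $F^j\lambda$'' (you mean $R_\kappa(0_m)=q_0$, so that $R_\kappa^\lambda(0_m)=(F^j\lambda)(q_0)=J_M^j\lambda$), but the surrounding logic is right.
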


\begin{proof}
Let $M':=F^jM$, $N':=F^jN$, $f':=F^jf\in D_O^\infty(M',N')$.
Let $S:=\Phi_{M,\kappa}^{i,j}$, $T:=\Phi_{N,\kappa}^{i,j}$.
We wish to prove: $(F^if')\circ S=T\circ(F^{i+j}f)$.

Let $\Pi:=\pi_M^j\circ\pi_{M'}^i:F^iM'\to M$ and $\pi:=\pi_M^{i+j}:F^{i+j}M\to M$.
Let $I:=\id_E:E\to E$ and $q_0:=J_E^jI\in F^jE$.
For all $\lambda\in\scrr_M^0$,
$(F^j\lambda)(q_0)=(F^j\lambda)(J_E^jI)=J_M^j(\lambda\circ I)=J_M^j\lambda$.
Then, for all $\lambda\in\scrr_M^0$,
\begin{eqnarray*}
(\Pi\circ S)(J_M^{i+j}\lambda)
&=&(\pi_M^j\circ\pi_{M'}^i\circ\Phi_{M,\kappa}^{i,j})(J_M^{i+j}\lambda)\\
&=&(\pi_M^j\circ\pi_{M'}^i)(J_{M'}^iR_\kappa^\lambda)
\,\,=\,\,\pi_M^j(R_\kappa^\lambda(0_m))\\
&=&\pi_M^j({\bm(}(F^j\lambda)\circ R_\kappa{\bm)}{\bm(}0_m{\bm)})
\,\,=\,\,\pi_M^j((F^j\lambda)(q_0))\\
&=&\pi_M^j(J_M^j\lambda)
\,\,=\,\,\lambda(0_d)
\,\,=\,\,\pi(J_M^{i+j}\lambda).
\end{eqnarray*}
Then $\Pi\circ S=\pi$.
Also, $\dom[f']=\dom[F^jf]=(\pi_M^j)^*(\dom[f])$.
Then
\begin{eqnarray*}
\Pi^*(\dom[f])&=&(\pi_{M'}^i)^*((\pi_M^j)^*(\dom[f]))\\
&=&(\pi_{M'}^i)^*(\dom[f'])\,\,=\,\,\dom[F^if'].
\end{eqnarray*}

Because we have $\im[F^{i+j}f]\subseteq F^{i+j}N=\dom[T]$,
it follows that $\dom[T\circ(F^{i+j}f)]=\dom[F^{i+j}f]$.
Let $Q:=\pi^*(\dom[f])$.
Then
\begin{eqnarray*}
\dom[(F^if')\circ S]&=&S^*(\dom[F^if'])\,\,=\,\,S^*(\Pi^*(\dom[f]))\\
&=&\pi^*(\dom[f])\,\,=\,\,Q.
\end{eqnarray*}
Also, $\dom[T\circ(F^{i+j}f)]=\dom[F^{i+j}f]=\pi^*(\dom[f])=Q$.
Let $q\in Q$ be~given.
We wish to prove that
$(F^if')(S(q))=T((F^{i+j}f)(q))$.

We have $q\in Q=\pi^*(\dom[f])\subseteq\dom[\pi]=F^{i+j}M$,
so choose $\mu\in\scrr_M^0$ such that $q=J_M^{i+j}\mu$.
Then $S(q)=\Phi_{M,\kappa}^{i,j}(J_M^{i+j}\mu)=J_{M'}^iR_\kappa^\mu$.
Also, $\mu(0_d)=\pi(J_M^{i+j}\mu)=\pi(q)\in\pi(Q)=\pi(\pi^*(\dom[f]))\subseteq\dom[f]$.

Let $\phi:=f\circ\mu\in\scrr_N^0$.
By functoriality of $F^j$, $F^j\phi=(F^jf)\circ(F^j\mu)$.
We have $R_\kappa^\mu=(F^j\mu)\circ R_\kappa$
and $R_\kappa^\phi=(F^j\phi)\circ R_\kappa$.
Then
$$f'\,\circ\,R_\kappa^\mu\,\,=\,\,(F^jf)\,\circ\,(F^j\mu)\,\circ\,R_\kappa\,\,=\,\,(F^j\phi)\,\circ\,R_\kappa\,\,=\,\,R_\kappa^\phi.$$
Then
$(F^if')(S(q))=(F^if')(J_{M'}^iR_\kappa^\mu)=J_{N'}^i(f'\circ R_\kappa^\mu)=J_{N'}^iR_\kappa^\phi$.

We have $(F^{i+j}f)(q)=(F^{i+j}f)(J_M^{i+j}\mu)=J_N^{i+j}(f\circ\mu)=J_N^{i+j}\phi$.
Then 
$T((F^{i+j}f)(q))\!=\!T(J_N^{i+j}\phi)\!=\!\Phi_{N,\kappa}^{i,j}(J_N^{i+j}\phi)\!=\!J_{N'}^iR_\kappa^\phi\!=\!(F^if')(S(q))$.
\end{proof}

\section{Loss of dimension in stabilizers\wrlab{sect-decay-moving-stabs}}

Let a Lie group $G$ act on a manifold~$M$.
Assume that the action is~$C^\infty$.
The $G$-action on $M$ induces a $G$-action on $TM$.
For all $p\in M$, let $G_p:=\Stab_G(p)$
and $G'_p:=\Stab'_G(p)=\{g\in G\,|\,\forall v\in T_pM,\,gv=v\}$.
Let $\Lg:=T_{1_G}G$.
For all $p\in M$, let $\Lg_p:=T_{1_G}G_p$,
and let $\Lg'_p:=T_{1_G}G'_p$.
Let $G^\circ$ denote the identity component of $G$.

\begin{lem}\wrlab{lem-decay-moving-stabs}
Assume the $G^\circ$-action on $M$ is fixpoint rare.
Let $k\in\N$.
Assume: $\forall p\in M$, $\dim\Lg_p=k$.
Then: $\forall^\circ p\in M$, $\dim\Lg'_p<k$.
\end{lem}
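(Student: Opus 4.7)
The plan is to argue by contradiction. By upper semicontinuity of $p\mapsto\dim\Lg'_p$, the set $\{p\in M\,:\,\dim\Lg'_p<k\}$ is open, so I need only show it is dense. I will assume otherwise: there is a nonempty open $U\subseteq M$ with $\dim\Lg'_p\ge k$ on $U$. Since $\Lg'_p\subseteq\Lg_p$ and $\dim\Lg_p=k$, this forces $\Lg'_p=\Lg_p$ throughout $U$. The goal is then to exhibit a single nonzero $X_0\in\Lg$ whose infinitesimal vector field $q\mapsto X_0q$ vanishes on a nonempty open subset of $U$; for then $\exp(tX_0)$ will be a nontrivial element of $G^\circ$ pointwise fixing that open set for small $t\ne 0$, contradicting fixpoint rareness of the $G^\circ$-action.

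First, since $\dim\Lg_p=k$ is constant on $M$, the fiberwise-linear map $(X,p)\mapsto Xp$ exhibits $p\mapsto\Lg_p$ as a smooth rank-$k$ subbundle of the trivial bundle $\Lg\times U\to U$, so near any point of $U$ it admits smooth sections $X:V\to\Lg$ with $X(p)\in\Lg_p$ for every $p\in V$. The heart of the argument will be to show, using $\Lg'_p=\Lg_p$, that every such section must satisfy $dX_p(T_pM)\subseteq\Lg_p$, so that the Gauss map $p\mapsto\Lg_p:U\to\mathrm{Gr}_k(\Lg)$ is locally constant. To that end I will fix local coordinates on $M$ near $p$ and write $\phi(X,q):=Xq$, which is linear in $X$; differentiating the identity $\phi(X(q),q)\equiv 0$ at $q=p$ in direction $Y\in T_pM$ will yield
\begin{equation*}
\phi(dX_p(Y),p)\,+\,\partial_q\phi(X(p),p)\cdot Y\,=\,0.
\end{equation*}
The second summand is the Jacobian at $p$ of the vector field $q\mapsto X(p)q$, evaluated at $Y$; since $X(p)\in\Lg'_p$ forces this Jacobian to vanish, the first summand is zero, i.e.\ $dX_p(Y)\in\Lg_p$.

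After shrinking $U$ so that $\Lg_p=\Lg_0$ for some fixed $k$-dimensional $\Lg_0\subseteq\Lg$ and all $p\in U$, any nonzero $X_0\in\Lg_0$ will satisfy $X_0q=0$ for all $q\in U$. The one-parameter subgroup $t\mapsto\exp(tX_0)$ will then pointwise fix $U$, and for any small $t\ne 0$ we will have $\exp(tX_0)\in G^\circ\setminus\{1_G\}$ (since $X_0\ne 0$), giving the desired contradiction. The main obstacle is the coordinate computation in the middle step: one must correctly identify the partial $\partial_q\phi(X(p),p)$ with the intrinsic linearization of the vector field $q\mapsto X(p)q$ at its zero $p$, and recognize that vanishing of that linearization is precisely the extra condition in $X(p)\in\Lg'_p$ beyond $X(p)\in\Lg_p$.
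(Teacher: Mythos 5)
Your proposal is correct, and it organizes the argument differently from the paper's proof, so the comparison is worth spelling out. Both proofs share the same underlying idea (fixpoint rarity prevents the stabilizer map $p\mapsto\Lg_p$ from being locally constant, and wherever it moves, $\Lg'_p$ must drop below $\Lg_p$), but they run the implication in opposite directions and use rather different computations.

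The paper argues directly: on a nonempty open set it builds one explicit local section $f_0$ of the stabilizer subbundle, with $f_0$ valued in an affine translate $Y+\Lc$ of a complement $\Lc$ to $\Lg_u$; it then shows (Claim 1) that $F:=\exp\circ f_0$ cannot be locally constant by fixpoint rarity, and at a point $q$ where $(df_0)_q\ne 0$ it performs a fairly intricate Lie-theoretic computation (passing through $r=F(q)$, $R=f_0(q)$, the isomorphism $\varepsilon=(de)_R$, and the identity $w=bq+rw$ obtained by differentiating $[F(p)]p=p$) to conclude $\Lg'_q\subsetneq\Lg_q$. Your proof takes the contrapositive: assume $\Lg'_p=\Lg_p$ on an open $U$, differentiate the section identity $X(q)q\equiv 0$ to get $(dX_p(Y))p+L_{X(p)}(Y)=0$ with $L_{X(p)}$ the linearization of the vector field $V_{X(p)}$ at its zero $p$, observe that $X(p)\in\Lg'_p$ forces $L_{X(p)}=0$, and conclude $dX_p(Y)\in\Lg_p$; this makes the Gauss map into the Grassmannian locally constant, and exponentiating a fixed nonzero $X_0\in\Lg_0$ produces a nontrivial element of $G^\circ$ fixing an open set.

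The step you flag as the ``main obstacle'' is indeed the crux, but it is sound: for $X\in\Lg_p$, the variational equation for the flow $\psi_t(q)=\exp(tX)q$ gives $(d\psi_t)_p=e^{tL_X}$, so $(d\psi_t)_p=\id$ for all $t$ is equivalent to $L_X=0$, which translates exactly to $X\in\Lg'_p$; and the coordinate Jacobian $\partial_q\phi(X(p),p)$ is coordinate-independent precisely because it is taken at a zero of the vector field. Your route buys a substantially cleaner key computation by differentiating before applying $\exp$ rather than after (avoiding the $\varepsilon$, $\widehat{B}$, and $w=bq+rw$ bookkeeping), at the modest cost of invoking the smooth-subbundle/Grassmannian framework. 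Both use the constant-rank hypothesis in the same essential way, to get smooth local sections of $p\mapsto\Lg_p$.
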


Let $\Lg_\bullet:=p\mapsto \Lg_p$
denote the stabilizer map from $M$ to the manifold of $k$-dimensional subspaces of $\Lg$.
The basic theme of the proof below is:
Since each nontrivial element of $G^\circ$ has interior-free fixpoint set,
it follows that: $\forall^\circ p\in M$, the differential $(d(\Lg_\bullet))_p$ is nonzero.
Morever, at such a point $p$, we can show that $\Lg'_p\subsetneq\Lg_p$.
Consequently, $\forall^\circ p\in M$, $\dim\Lg'_p\le(\dim\Lg_p)-1=k-1<k$.
In words: ``By fixpoint rarity, the stabilizer map cannot be constant on a nonempty open set.
Moreover, wherever the stabilizer map is `on the move', it is strictly larger than the first order stabilizer.
Consequently, generically, the first order stabilizer is strictly smaller than the stabilizer.''
Details follow.

\begin{proof}
Define $S:=\{p\in M\,|\,\dim\Lg'_p<k\}$.
By upper semi-continuity of~$p\mapsto\dim\Lg'_p:M\to\N_0$,
we see that $S$ is open in $M$.
It therefore suffices to~show that $S$ is dense in $M$.
Let a nonempty open subset $M'$ of $M$ be given.
We wish to prove that $M'\cap S\ne\emptyset$.

For all $g\in G$, define $\overline{g}:M\to M$ by~$\overline{g}(p)=gp$.
For all $g\in G$, for all~$v\in TM$, we have $gv=(d\,\overline{g})(v)$.
For all $p\in M$, define $\overline{p}:G\to M$ by $\overline{p}(g)=gp$.
For all $x\in TG$, for all $p\in M$, let $xp:=(d\,\overline{p})(x)$.
By~\lref{lem-orbit-map-deriv-redux},
for all $p\in M$, the kernel of $X\mapsto Xp:\Lg\to T_pM$ is $\Lg_p$.

For all $p\in M$, let $0_p:=0_{T_pM}$.
For all $g\in G$, let $0_g:=0_{T_gG}$.

Let $e:=\exp:\Lg\to G^\circ$ be the Lie theoretic exponential map.
Choose an~open neighborhood $\Lg_*$ in $\Lg$ of $0_\Lg$
and an open neighborhood $G_*$ in~$G^\circ$ of~$1_G$
such that $e(\Lg_*)=G_*$ and such that $e|\Lg_*:\Lg_*\to G_*$ is a~$C^\infty$~diffeomorphism.
Let $\Lg_*^\times:=\Lg_*\backslash\{0_\Lg\}$.
Let $G_*^\times:=G_*\backslash\{1_G\}$.
Then $e(\Lg_*^\times)=G_*^\times$.
Since $G_*\subseteq G^\circ$, we conclude that
$G_*^\times\subseteq G^\circ\backslash\{1_G\}$.

As $M'\ne\emptyset$, choose $u\in M'$.
Choose a vector subspace $\Lc$ of $\Lg$
such that both $\Lc\cap\Lg_u=\{0_\Lg\}$ and $\Lc+\Lg_u=\Lg$.
Then $(\dim\Lc)+(\dim\Lg_u)=\dim\Lg$.
Let $\ell:=\dim\Lc$ and let $n:=\dim\Lg$.
Then, as $k=\dim\Lg_u$, we get $\ell+k=n$.
Choose $C_1,\ldots,C_\ell\in\Lc$ such that
$\{C_1,\ldots,C_\ell\}$ is a basis of $\Lc$.
Since $\Lc\cap\Lg_u$ is the kernel of
$C\mapsto Cu:\Lc\to T_uM$
and since $\Lc\cap\Lg_u=\{0_\Lg\}$,
it follows that $C\mapsto Cu:\Lc\to T_uM$ is injective.
So, because $C_1,\ldots,C_\ell$ are linearly independent in $\Lc$,
we conclude that $C_1u,\ldots,C_\ell u$ are linearly independent in~$T_uM$.
Choose an open neighborhood $M_1$ in $M'$ of $u$ such that,
for all~$p\in M_1$,
the vectors $C_1p,\ldots,C_\ell p$ are linearly independent in $T_pM$.
For all $p\in M$, the map $X\mapsto Xp:\Lg\to T_pM$ has kernel~$\Lg_p$ and image~$\Lg p$,
and so $\dim(\Lg p)=(\dim\Lg)-(\dim\Lg_p)=n-k=\ell$.
Then, for all~$p\in M_1$, the set $\{C_1p,\ldots,C_\ell p\}$ is a basis of $\Lg p$.

By hypothesis, $k\in\N$, so $k\ge1$.
Because $\dim\Lg_u=k\ge1$, we see that $\Lg_u\ne\{0_\Lg\}$.
Choose $Y_0\in\Lg_u\backslash\{0_\Lg\}$.
Choose $s_0\in\R\backslash\{0\}$ such that $s_0Y_0\in\Lg_*$.
Let $Y:=s_0Y_0$.
Then $Y\in(\Lg_u\backslash\{0_\Lg\})\cap\Lg_*=\Lg_u\cap\Lg_*^\times$.

For all~$p\in M_1$, we know
both that $\{C_1p,\ldots,C_\ell p\}$ is a basis of $\Lg p$
and that $Yp\in\Lg p$.
Define $a_1,\ldots,a_\ell:M_1\to\R$ by:
for all $p\in M_1$, $\displaystyle{Yp=\sum_{j=1}^\ell[a_j(p)][C_jp]}$.
Then $a_1,\ldots,a_\ell\in C^\infty(M_1,\R)$.
Since $Y\in\Lg_u$, we get $Yu=0_u$,
and so $a_1(u)=\cdots=a_\ell(u)=0$.
Define $\chi:M_1\to\Lc$ by~$\displaystyle{\chi(p)=\sum_{j=1}^\ell\,\,[a_j(p)]C_j}$.
Then $\chi(u)=0_\Lc=0_\Lg$.
For all~$p\in M_1$, we have $\displaystyle{[\chi(p)]p}=\sum_{j=1}^\ell[a_j(p)][C_jp]=Yp$.
Let the map $f_0:M_1\to\Lg$ be defined by~$f_0(p)=Y-[\chi(p)]$.
Then, for all $p\in M_1$, we have
$$[f_0(p)]p\,\,\,=\,\,\,Yp\,-\,[\chi(p)]p\,\,\,=\,\,\,Yp\,-\,Yp\,\,\,=\,\,\,0_p,$$
so $f_0(p)\in\Lg_p$.
Also, $f_0(u)=Y-0_\Lg=Y\in\Lg_*^\times$,
so $u\in f_0^*(\Lg_*^\times)$,
so $f_0^*(\Lg_*^\times)\ne\emptyset$.
Since $\Lg_*^\times$~is open in $\Lg$ and $f_0:M_1\to\Lg$ is continuous,
we see that $f_0^*(\Lg_*^\times)$ is open in~$M_1$, and,
therefore, is open in $M$ as well.
As $M$ is locally connected,
choose a nonempty connected open subset~$M_\circ$ of $M$
such that $M_\circ\subseteq f_0^*(\Lg_*^\times)$.
Then $M_\circ\subseteq \dom[f_0^*]=M_1\subseteq M'$,
so it suffices to prove that $M_\circ\cap S\ne\emptyset$.

We have $f_0(M_\circ)\subseteq f_0(f_0^*(\Lg_*^\times))\subseteq\Lg_*^\times$.
Define $f:=f_0|M_\circ:M_\circ\to\Lg_*^\times$.
Define $F:=e\circ f:M_\circ\to G_*^\times$.
For all $p\in M_\circ$,
$f(p)=f_0(p)\in\Lg_p$,
so $F(p)=(e\circ f)(p)=e(f(p))\in e(\Lg_p)\subseteq G_p$,
and so $[F(p)]p=p$.

{\it Claim 1:}
$f$ is not constant on $M_\circ$.
{\it Proof of Claim 1:}
Since $F=e\circ f$,
it suffices to show that $F$ is not constant on $M_\circ$.
Let $g\in\im[F]$ be given.
We wish to show that $F(M_\circ)\ne\{g\}$.

We have $g\in\im[F]\subseteq G_*^\times\subseteq G^\circ\backslash\{1_G\}$,
so, as the $G^\circ$-action on $M$ is fixpoint rare,
the interior in~$M$ of $\Fix_M(g)$ is empty.
So, as $M_\circ$ is a nonempty open subset of $M$,
we get $M_\circ\not\subseteq\Fix_M(g)$.
Choose $p\in M_\circ$ such that $p\notin\Fix_M(g)$.
Then $[F(p)]p=p\ne gp$, so $F(p)\ne g$.
Then $F(M_\circ)\ne\{g\}$, as desired.
{\it End of proof of Claim 1.}

Since $M_\circ$ is connected, by Claim 1, choose $q\in M_\circ$
such that the differential of $f$ does not vanish at $q$,
{\it i.e.}, such that $(df)_q\ne\bfz_{T_qM,T_{f(q)}\Lg}$.
It suffices to show that $q\in M_\circ\cap S$.
So, since $q\in M_\circ$, it suffices to~show that $q\in S$.
That is, we wish to show that $\dim\Lg'_q<k$.
Since $G'_q\subseteq G_q$, we get $\Lg'_q\subseteq\Lg_q$.
Then, as $\dim\Lg_q=k$, we need only show that $\Lg'_q\ne\Lg_q$.

Let $r:=F(q)$ and let $R:=f(q)$.
We have $r=F(q)\in G_q$, so $rq=q$.
Also, $R=f(q)\in\Lg_q$.
Also, $R=f(q)\in\im[f]\subseteq\Lg_*^\times\subseteq\Lg_*$.
Also, $e(R)=e(f(q))=(e\circ f)(q)=F(q)=r$.
Let $\phi:=(df)_q:T_qM\to T_R\Lg$ and $\Phi:=(dF)_q:T_qM\to T_rG$ and $\varepsilon:=(de)_R:T_R\Lg\to T_rG$.
Since $F=e\circ f$ and $f(q)=R$,
by the Chain Rule, $(dF)_q=(de)_R\circ(df)_q$.
That is, $\Phi=\varepsilon\circ\phi$.
As $R\in\Lg_*$ and as $e|\Lg_*:\Lg_*\to G_*$ is a~$C^\infty$~diffeomorphism,
it follows that the map $(de)_R:T_R\Lg\to T_rG$ is a vector space isomorphism.
That is, the map $\varepsilon:T_R\Lg\to T_rG$ is a vector space isomorphism.

Since $R\in\Lg_q$,
it suffices to show that $R\notin\Lg'_q$.
So, since $e(R)=r$ and $e(\Lg'_q)\subseteq G'_q$,
it suffices to show that $r\notin G'_q$.
Let $W:=T_qM$.
We wish to show that
there exists $w\in W$ such that $rw\ne w$.

Since $\phi=(df)_q\ne\bfz_{T_qM,T_{f(q)}\Lg}=\bfz_{W,T_R\Lg}$,
choose $w\in W$ such that $\phi(w)\ne0_{T_R\Lg}$.
We wish to show that $rw\ne w$.

For all $X\in\Lg$, let $\widehat{X}:=(d/dt)_{t=0}(R+tX)\in T_R\Lg$.
Then the function $X\mapsto\widehat{X}:\Lg\to T_R\Lg$ is a vector space isomorphism.
So, because we have $\phi(w)\in\im[\phi]\subseteq T_R\Lg$,
choose $B\in\Lg$ such that $\widehat{B}=\phi(w)$.
Then, by~the choice of $w$, $\widehat{B}\ne0_{T_R\Lg}$.
So, by injectivity of $X\mapsto\widehat{X}:\Lg\to T_R\Lg$, we get $B\ne0_{\Lg}$.
Let $b:=\Phi(w)$.
Then $b=\Phi(w)=(\varepsilon\circ\phi)(w)=\varepsilon(\phi(w))=\varepsilon(\widehat{B})$.

We have $\im[\chi]\subseteq\Lc$.
Then, for all $p\in M_\circ$, we see that
$$f(p)\,\,\,=\,\,\,f_0(p)\,\,\,=\,\,\,Y-[\chi(p)]\,\,\,\in\,\,\,Y\,-\,\Lc\,\,\,=\,\,\,Y\,+\,\Lc.$$
Then $\im[f]\subseteq Y+\Lc$.
Then $R=f(q)\in\im[f]\subseteq Y+\Lc$.
Also, we have $w\in W=T_qM$ and $f(q)=R$.
Then $(df)_q(w)\in T_R(Y+\Lc)$.

For all $\Ls\subseteq\Lg$, let $\widehat{\Ls}:=\{\widehat{X}\,|\,X\in\Ls\}$.
For any subspace $\Ls$ of $\Lg$,
if $R\in\Ls$, then $T_R\Ls=\widehat{\Ls}$.
So, since $R\in\Lg_q$, we get $T_R\Lg_q=\widehat{\,\Lg_q}$.
For any subspace $\Ls$ of~$\Lg$, for any $X\in\Lg$,
if $R\in X+\Ls$, then $T_R(X+\Ls)=\widehat{\Ls}$.
So, since $R\in Y+\Lc$, we get $T_R(Y+\Lc)=\widehat{\Lc}$.
Then
$$\widehat{B}\,\,\,=\,\,\,\phi(w)\,\,\,=\,\,\,(df)_q(w)\,\,\,\in\,\,\,T_R(Y+\Lc)\,\,\,=\,\,\,\widehat{\Lc}.$$
Then, by injectivity of $X\mapsto\widehat{X}:\Lg\to T_R\Lg$,
we get $B\in\Lc$.
Since $q\in M_\circ\subseteq M_1$,
it follows that $C_1q,\ldots,C_\ell q$ are linearly independent.
So, as the $\R$-span of $\{C_1,\ldots,C_\ell\}$ is $\Lc$,
the map $C\mapsto Cq:\Lc\to T_qM$ is injective.
So, since $B\in\Lc$ and $B\ne0_\Lg=0_\Lc$, it follows that $Bq\ne0_q$.
Then $B\notin\Lg_q$, so, by injectivity of $X\mapsto\widehat{X}:\Lg\to T_R\Lg$,
we get $\widehat{B}\notin\widehat{\,\Lg_q}$,
and so, by injectivity of $\varepsilon:T_R\Lg\to T_rG$,
we get $\varepsilon(\widehat{B})\notin\varepsilon(\widehat{\,\Lg_q})$.

We have $b=\Phi(w)\in\im[\Phi]\subseteq T_rG$ and $w\in W=T_qM$.
Then $bq\in[T_rG]q\subseteq T_{rq}M$
and that $rw\in r[T_qM]=T_{rq}M$.
Recall: $rq=q$.
Then $bq,rw\in T_{rq}M=T_qM=W$.
We wish to show that $w-rw\ne0_W$.

As $\varepsilon:T_R\Lg\to T_rG$ is a vector space isomorphism,
we conclude that $\dim(\varepsilon(T_R\Lg_q))=\dim(T_R\Lg_q)$.
As $R\in\Lg_q$ and $e(R)=r$ and $e(\Lg_q)\subseteq G_q$,
it follows that $(de)_R(T_R\Lg_q)\subseteq T_rG_q$.
That is, $\varepsilon(T_R\Lg_q)\subseteq T_rG_q$.
So, since
$$\dim(\varepsilon(T_R\Lg_q))=\dim(T_R\Lg_q)=\dim\Lg_q=\dim G_q=\dim(T_rG_q),$$
we get $\varepsilon(T_R\Lg_q)=T_rG_q$.
Then $b=\varepsilon(\widehat{B})\notin\varepsilon(\widehat{\,\Lg_q})=\varepsilon(T_R\Lg_q)=T_rG_q$.
As $r\in G_q$,
by \cref{cor-orbit-map-deriv} (with $p$ replaced by $q$ and $a$ by $r$),
it follows that $\ker[(d\,\overline{q})_r]=T_rG_q$.
So, since $b\notin T_rG_q$,
we get $(d\,\overline{q})_r(b)\ne0_q$.
Since $b\in T_rG$, we get $(d\,\overline{q})(b)=(d\,\overline{q})_r(b)$.
Since $W=T_qM$, we have $0_W=0_q$.
Then $bq=(d\,\overline{q})(b)=(d\,\overline{q})_r(b)\ne0_q=0_W$.

Define $a:G\times M\to M$ by $a(g,p)=gp$.
Define $F_1:M_\circ\to G\times M$ by $F_1(p)=(F(p),p)$.
Then $(dF_1)(w)=(\Phi(w),w)=(b,w)$.
Recall that, for all $p\in M_\circ$, we have $[F(p)]p=p$.
Then, for all $p\in M_\circ$, we have
$$(a\circ F_1)(p)\,\,=\,\,a(F_1(p))\,\,=\,\,a(F(p),p)\,\,=\,\,[F(p)]p\,\,=\,\,p.$$
Then $(d(a\circ F_1))(w)=w$.
By the Chain Rule,
$d(a\circ F_1)=(da)\circ(dF_1)$.
For all $g\in G$, $a(g,q)=gq=\overline{q}(g)$.
Then $(da)(b,0_q)=(d\,\overline{q})(b)=bq$.
For all~$p\in M$, $a(r,p)=rp=\overline{r}(p)$.
Then $(da)(0_r,w)=(d\,\overline{r})(w)=rw$.
Putting all this together, we get
\begin{eqnarray*}
w&=&(d(a\circ F_1))(w)\,\,=\,\,((da)\circ(dF_1))(w)\,\,=\,\,(da)((dF_1)(w))\\
&=&(da)(b,w)\,\,=\,\,[(da)(b,0_q)]+[(da)(0_r,w)]\,\,=\,\,bq+rw.
\end{eqnarray*}
Then $w-rw=bq\ne0_W$, as desired.
\end{proof}

\begin{lem}\wrlab{lem-first-order-stab-small}
Assume the $G^\circ$-action on $M$ is fixpoint rare.
Let $k\in\N$.
Assume: $\forall^\circ p\in M$, $\dim\Lg_p\le k$.
Then: $\forall^\circ p\in M$, $\dim\Lg'_p\le k-1$.
\end{lem}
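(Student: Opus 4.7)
The plan is to stratify $M$ by the function $p\mapsto\dim\Lg_p$ and apply \lref{lem-decay-moving-stabs} on each stratum. First, by upper semi-continuity of $p\mapsto\dim\Lg_p$, the hypothesis says that $U:=\{p\in M\,|\,\dim\Lg_p\le k\}$ is a dense open subset of $M$. For each $j\in\{0,1,\ldots,k\}$, let $V_j:=\{p\in M\,|\,\dim\Lg_p=j\}$ and let $W_j$ be the interior in $M$ of $V_j$. I claim that $W:=W_0\cup\cdots\cup W_k$ is dense in $M$. Given any nonempty open $O\subseteq M$, the set $O\cap U$ is a nonempty open subset of a Baire space, partitioned into the finitely many pieces $O\cap U\cap V_j$; at least one piece has nonempty interior in $O\cap U$, and that interior is open in $M$ and contained in $V_j$, hence in $W_j\subseteq W$.

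Next, since $\Lg_{gp}=\Ad(g)(\Lg_p)$ for every $g\in G$ and $p\in M$, each $V_j$ is $G$-invariant, hence so is each $W_j$; in particular $W_j$ is $G^\circ$-invariant. For any $g\in G^\circ\backslash\{1_G\}$, the interior in $W_j$ of $\Fix_{W_j}(g)=\Fix_M(g)\cap W_j$ is open in $M$ and contained in $\Fix_M(g)$, hence empty because the $G^\circ$-action on $M$ is fixpoint rare. Thus the $G^\circ$-action on the open submanifold $W_j$ is itself fixpoint rare.

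Now, for each $j\in\{1,\ldots,k\}$, apply \lref{lem-decay-moving-stabs} with $M$ replaced by $W_j$ and $k$ replaced by $j$: on a dense open subset $S_j$ of $W_j$ we have $\dim\Lg'_p<j\le k$, i.e. $\dim\Lg'_p\le k-1$. For $j=0$, the inclusion $\Lg'_p\subseteq\Lg_p$ yields $\dim\Lg'_p=0\le k-1$ on all of $W_0$, so set $S_0:=W_0$. Let $S:=\{p\in M\,|\,\dim\Lg'_p\le k-1\}$; this set is open in $M$ by upper semi-continuity of $p\mapsto\dim\Lg'_p$, and contains each $S_j$. Consequently $S$ is dense in each $W_j$, hence dense in $W$, and therefore dense in $M$, giving the desired conclusion.

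The only nonroutine step is the Baire-category reduction showing $W$ is dense, and once upper semi-continuity is invoked this is a short finite-partition argument; the rest is bookkeeping together with the invocation of \lref{lem-decay-moving-stabs}, whose hypotheses transfer to each open $G^\circ$-invariant stratum without difficulty.
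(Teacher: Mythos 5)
Your proposal reaches the correct conclusion but takes a genuinely different route from the paper. You stratify $M$ by the exact value of $p\mapsto\dim\Lg_p$, taking the interior $W_j$ of each level set $V_j:=\{p\,|\,\dim\Lg_p=j\}$, and invoke \lref{lem-decay-moving-stabs} once on each nonempty $W_j$ with $j\ge1$. The paper invokes \lref{lem-decay-moving-stabs} only once: it sets $U:=\{p\,|\,\dim\Lg_p<k\}$ and $M_1:=\{p\,|\,\dim\Lg_p\le k\}$, observes that $U$ already lies in the target set $S:=\{p\,|\,\dim\Lg'_p<k\}$ (trivially, since $\Lg'_p\subseteq\Lg_p$), and then notes that $M_0:=M_1\setminus\overline{U}$ is a single open $G$-invariant set on which $\dim\Lg_p\equiv k$, so the previous lemma applies to it directly; finally $\overline{U}\subseteq\overline{S}$ and $M_0\subseteq\overline{S}$ together give $M=\overline{M_1}\subseteq\overline{S}$. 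Both arguments work; the paper's sidesteps the stratification by folding every lower-dimensional stratum into $\overline{U}$ all at once, while yours is the more naive decomposition.

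One step in your density argument for $W$ is not correct as written. You appeal to Baire category to conclude that, in a finite partition of the nonempty open set $O\cap U$, at least one piece has nonempty interior. Baire's theorem only yields that at least one piece is not nowhere dense (its closure has nonempty interior), and the stronger claim fails for arbitrary finite partitions --- e.g.\ splitting an interval into its rational and irrational points gives two pieces with empty interior. The claim you need is nevertheless true, but for a structural reason, not a Baire one: by upper semi-continuity the sublevel sets $U_j:=\{p\,|\,\dim\Lg_p\le j\}$ are open and nested, with $V_j=U_j\setminus U_{j-1}$. Given any nonempty open $O\subseteq M$, let $j$ be least with $U_j\cap O\cap U\ne\emptyset$ (this exists because $U_k\cap O\cap U=O\cap U\ne\emptyset$); then $U_{j-1}\cap O\cap U=\emptyset$, so $V_j\cap O\cap U=U_j\cap O\cap U$ is nonempty open and contained in $V_j$, hence contained in $W_j\subseteq W$. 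No Baire category is needed. With that local repair, your proposal goes through.
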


\begin{proof}
For all $L\subseteq M$, let $\overline{L}$ denote the closure in $M$ of $L$.

Let $S:=\{p\in M\,|\,\dim\Lg'_p<k\}$.
Then, by upper semi-continuity of~$p\mapsto\dim\Lg'_p:M\to\N_0$,
we conclude that $S$ is open in $M$.
So, since $S=\{p\in M\,|\,\dim\Lg'_p\le k-1\}$,
it suffices to~show that $S$ is dense in $M$.

Let $U:=\{p\in M\,|\,\dim\Lg_p<k\}$.
For all $p\in M$, we have $G'_p\subseteq G_p$,
and so $\Lg'_p\subseteq\Lg_p$.
Then $U\subseteq S$, so $\overline{U}\subseteq\overline{S}$.
Let $M_1:=\{p\in M\,|\,\dim\Lg_p<k+1\}$.
By upper semi-continuity of $p\mapsto\dim\Lg_p:M\to\N_0$, we see that $M_1$~is open in $M$.
Because $M_1=\{p\in M\,|\,\dim\Lg_p\le k\}$,
by hypothesis, we know that $M_1$ contains a dense open subset of $M$.
Then $\overline{M_1}=M$.

Let $M_0:=M_1\backslash\overline{U}$.
Then $M_0$ is a $G$-invariant open subset of $M$.
Since the $G$-action on $M$ is fixpoint rare,
it follows that the $G$-action on $M_0$ is fixpoint rare as well.
For all $p\in M_0$, $\dim\Lg_p=k$.
So, by \lref{lem-decay-moving-stabs} (with $M$ replaced by~$M_0$),
$\forall^\circ p\in M_0,\dim\Lg'_p<k$.
Let $S_0:=\{p\in M_0\,|\,\dim\Lg'_p<k\}$.
Then $S_0$ contains a dense open subset of~$M_0$.
Then $M_0\subseteq\overline{S_0}$.
Since $S_0=M_0\cap S\subseteq S$, we get $\overline{S_0}\subseteq\overline{S}$.
Then $M_1\backslash\overline{U}=M_0\subseteq\overline{S_0}\subseteq\overline{S}$.
Then $M_1\subseteq\overline{U}\cup(M_1\backslash\overline{U})\subseteq\overline{S}\cup\overline{S}=\overline{S}$,
and so $\overline{M_1}\subseteq\overline{S}$.
Then $M=\overline{M_1}\subseteq\overline{S}$,
so $S$ is dense in $M$, as desired.
\end{proof}

\section{Local freeness\wrlab{sect-local-freeness}}

Here is a frame bundle analogue of Theorem 6.14 of \cite{olver:movfrmsing}:

\begin{thm}\wrlab{thm-high-stabs}
Let a Lie group $G$ act on a manifold~$M$.
Assume that the action is $C^\infty$.
Let $G^\circ$ denote the identity component of $G$.
Assume that the $G^\circ$-action on $M$ is fixpoint rare.
Let $n:=\dim G$.
Assume that $n\ge1$.
Then there exists a $G$-invariant dense open subset~$Q$ of $F^{n-1}M$
such that the $G$-action on~$Q$ is locally free.
\end{thm}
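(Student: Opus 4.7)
The plan is to iterate \lref{lem-first-order-stab-small} up the tower of frame bundles $M_j := F^jM$ for $j = 0, 1, \ldots, n-1$, each carrying the lifted $C^\infty$ $G$-action $(g,q)\mapsto(F^j\overline g)(q)$, where $\overline g(x):=gx$. The crucial translation is that, for any $q\in M_j$ with $p:=\pi_M^j(q)$, the stabilizer in $G$ of $q$ depends only on $p$ and equals $G^{(j)}_p := \{g\in G : \overline g\sim\id_M\,[[j,p]]\}$, while the first-order stabilizer of $q$ in the $M_j$-action equals $G^{(j+1)}_p$. Both identifications follow from \lref{lem-lose-j-derivs-from-ell}: the $i = 0$ case yields the stabilizer identification, and the $i = 1$ case yields the first-order one (using that a diffeomorphism fixes a point's entire tangent space iff its first-order jet there is trivial). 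Setting $\Lg^{(j)}_p := T_{1_G}G^{(j)}_p$, the relevant dimension functions on $M_j$ thus factor through $\pi_M^j$ as functions on $M$.

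Before running the induction I dispatch two preliminaries. First, the $G^\circ$-action on $M_j$ is fixpoint rare: $\pi_M^j$ is $G$-equivariant and open, so any nonempty open subset of $\Fix_{M_j}(g)$ would project to a nonempty open subset of $\Fix_M(g)$, contradicting fixpoint rarity downstairs. Second, I establish the base case $\forall^\circ p\in M,\,\dim\Lg_p\le n-1$: if $\dim\Lg_p=n$ then $G^\circ\subseteq G_p$, so $p\in\Fix_M(g_0)$ for any chosen $g_0\in G^\circ\setminus\{1_G\}$ (which exists since $n\ge 1$); this fixpoint set has empty interior, and $\{p:\dim\Lg_p\le n-1\}$ is open by upper semi-continuity of $p\mapsto\dim\Lg_p$.

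The induction claim is that for each $j\in\{0,1,\ldots,n-1\}$, $\forall^\circ p\in M,\,\dim\Lg^{(j)}_p\le n-1-j$. The base case $j=0$ is just established. For the inductive step, assume the claim at level $j\le n-2$ and pull back through the open continuous surjection $\pi_M^j$ to get $\forall^\circ q\in M_j,\,\dim\Lg^{(j)}_{\pi_M^j q}\le n-1-j$, which by the translation is the hypothesis of \lref{lem-first-order-stab-small} applied to the $M_j$-action with $k:=n-1-j\ge 1$. That lemma yields $\forall^\circ q\in M_j,\,\dim\Lg^{(j+1)}_{\pi_M^j q}\le n-2-j$; pushing this dense open subset of $M_j$ down through the open map $\pi_M^j$ produces the claim at level $j+1$. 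At $j=n-1$, pulling back to $M_{n-1}$ produces a dense open subset $Q\subseteq M_{n-1}$ on which $\Stab_G(q)$ has trivial Lie algebra; since $\Stab_G(q)$ is closed in $G$, it is discrete. The set $Q$ is $G$-invariant because $\Stab_G(gq)=g\,\Stab_G(q)\,g^{-1}$ forces stabilizer dimensions to be conjugation-invariant.

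The main work, I expect, is the setup rather than the induction: installing the two stabilizer translations between $M_j$ and $M$, and confirming the hypotheses of the decay lemma at every level (fixpoint rarity on $M_j$, and the generic dimension bound expressed via $\pi_M^j$). Once these are in hand, the iteration is mechanical, with each application of \lref{lem-first-order-stab-small} peeling one dimension off the generic stabilizer, so that after $n-1$ peelings, starting from the improved initial bound $n-1$ rather than $n$, we arrive at a dimension zero stabilizer generically on $F^{n-1}M$.
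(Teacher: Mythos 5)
Your proposal is correct and follows essentially the same route as the paper: establish fixpoint rarity on each $F^jM$, translate stabilizers and first-order stabilizers on $F^jM$ into jet conditions on $M$ via \lref{lem-lose-j-derivs-from-ell}, and then iterate \lref{lem-first-order-stab-small} to peel one dimension off the generic stabilizer at each level, starting from the fixpoint-rare base bound of $n-1$. The only cosmetic difference is that you assert the two translations as equalities ($\Stab_G(q)=G_p^{(j)}$ and $\Stab_G'(q)=G_p^{(j+1)}$), whereas the paper proves and uses only the one-directional inclusions it needs.
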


We will argue below that, at each level in the frame bundle tower,
if the generic stabilizer dimension is positive,
then, at the next level up, it will decrease by at least one.
By fixpoint rarity, it starts out less than $n$, so,
after $n-1$ transitions, it must be zero.
Details follow.

\begin{proof}
Let $\Lg:=T_{1_G}G$.
For all $p\in M$, let $G_p:=\Stab_G(p)$, $\Lg_p:=T_{1_G}G_p$.

Since $\dim G^\circ=\dim G=n\ge1$, we have $G^\circ\ne\{1_G\}$.
Choose $g_0\in G^\circ\backslash\{1_G\}$.
Let $F_0:=\Fix_X(g_0)$.
By continuity of $p\mapsto g_0p:M\to M$,
$F_0$ is closed in $M$.
The $G^\circ$-action on $M$ is fixpoint rare,
so $F_0$ has empty interior in~$M$.
Let $U_0:=M\backslash F_0$.
Then $U_0$ is a dense open subset of $M$.

{\it Claim 1:} Let $p\in U_0$. Then $\dim\Lg_p<n$.
{\it Proof of Claim 1:}
Since $\Lg_p\subseteq\Lg$ and since $n=\dim G=\dim\Lg$,
it suffices to show that $\Lg_p\ne\Lg$.
Assume that $\Lg_p=\Lg$.
We aim for a contradiction.

Since $\Lg\subseteq\Lg_p$, we get $G^\circ\subseteq G_p$.
Then $g_0\in G^\circ\subseteq G_p$, so $g_0p=p$.
Then $p\in\Fix_M(g_0)=F_0$.
Also, $p\in U_0=M\backslash F_0$, so $p\notin F_0$.
Then both $p\in F_0$ and $p\notin F_0$.
Contradiction.
{\it End of proof of Claim 1.}

For all $j\in\N_0$, let $M_j:=F^jM$ and $\pi_j:=\pi_M^j$.
The $G$-action on~$M$ induces a $G$-action on $M_j$ and this,
in turn, induces a $G$-action on $TM_j$.
For all~$j\in\N_0$, for all~$q\in M_j$,
let $G_q:=\Stab_G(q)$, let $G'_q:=\Stab'_G(q)$,
let $\Lg_q:=T_{1_G}G_q$,
and let $\Lg'_q:=T_{1_G}G'_q$.
For all $g\in G$, let $\overline{g}:M\to M$ be defined by~$\overline{g}(p)=gp$.
Let $I:=\id_M:M\to M$.
For all $p\in M$, for all~$j\in\N_0$,
let $G_p^{(j)}:=\{g\in G\,|\,\overline{g}\sim I\,\,[[j,p]]\}$,
and let $\Lg_p^{(j)}:=T_{1_G}G_p^{(j)}$.

{\it Claim 2:} Let $j\in\N_0$.
Let $q\in M_j$ and let $p:=\pi_j(q)$.
Then $\Lg_q\subseteq\Lg_p^{(j)}$.
{\it Proof of~Claim 2:}
It suffices to show that $G_q\subseteq G_p^{(j)}$.
Let $g\in G_q$ be given.
We wish to prove that $g\in G_p^{(j)}$.

Since $g\in G_q$, $gq=q$.
Then $(F^j\overline{g})(q)=gq=q=(F^jI)(q)$.
So, by~\lref{lem-stab-above-highstab-below},
$\overline{g}\sim I\,\,[[j,p]]$.
So $g\in G_p^{(j)}$.
{\it End of proof of Claim 2.}

{\it Claim 3:} Let $j\in\N_0$.
Let $q\in M_j$ and let $p:=\pi_j(q)$.
Then $\Lg_p^{(j+1)}\subseteq\Lg'_q$.
{\it Proof of~Claim 3:}
It suffices to show that $G_p^{(j+1)}\subseteq G'_q$.
Let $g\in G_p^{(j+1)}$ be given.
We wish to prove that $g\in G'_q$.

Since $g\in G_p^{(j+1)}$, we get $\overline{g}\sim I\,\,[[j+1,p]]$.
So, by $\Rightarrow$ of~\lref{lem-lose-j-derivs-from-ell} (with $i$ replaced by $1$),
$F^j\overline{g}\sim F^jI\,\,[[1,q]]$.
Then $(d(F^j\overline{g}))_q=(d(F^jI))_q$.
Then, for all $v\in T_qM_j$, we have $gv=(d(F^j\overline{g}))_q(v)=(d(F^jI))_q(v)=v$.
It follows that $g\in\Stab'_G(q)=G'_q$, as desired.
{\it End of proof of Claim 3.}

Let $m:=n-1$.
Let $Q:=\{q\in M_m\,|\,\Lg_q=\{0_\Lg\}\}$.
Then we have $Q\subseteq M_m=M_{n-1}=F^{n-1}M$.
Also, $Q$~is $G$-invariant, and, moreover, the $G$-action on $Q$ is locally free.
Also, $Q=\{q\in M_m\,|\,\dim\Lg_q<1\}$,
so, by~upper semi-continuity of~the mapping $q\mapsto\dim\Lg_q:M_m\to\N_0$,
we see that $Q$ is an open subset of~$M_m$.
It remains to show that $Q$ is dense in~$M_m$.
It suffices to show that $Q$ contains a dense open subset of $M_m$.
That is, it suffices to prove: $\forall^\circ q\in M_m$, $\Lg_q=\{0_\Lg\}$.

The preimage, under $\pi_m:M_m\to M$, of a dense open set is dense open.
It therefore suffices, by~Claim~2, to show: $\forall^\circ p\in M$, $\Lg_p^{(m)}=\{0_\Lg\}$.

For all $j\in\N_0$,
let $K_j:=\{k\in\N_0\,|\,\forall^\circ p\in M,\,\dim\Lg_p^{(j)}\le k\}$.
Then $K_0\subseteq K_1\subseteq K_2\subseteq\cdots$.
We wish to show that $0\in K_m$.
We will, in fact, show, for all integers $j\in[0,m]$, that $m-j\in K_j$.

For all $p\in M$, we have $\Lg_p^{(0)}=\Lg_p$.
So, by Claim 1, for all $p\in U_0$, we have $\dim\Lg_p^{(0)}<n$,
so $\dim\Lg_p^{(0)}\le n-1=m$.
Then, because $U_0$ is a dense open subset of $M$, we get $m\in K_0$.
Therefore, by induction, it suffices to show, for every integer $j\in[0,m-1]$, that
$${\bm(}\,m-j\in K_j\,{\bm)}\qquad\Rightarrow\qquad{\bm(}\,m-j-1\in K_{j+1}\,{\bm)}.$$
Let an integer $j\in[0,m-1]$ be given, let $k:=m-j$,
and assume that $k\in K_j$.
We wish to show that $k-1\in K_{j+1}$.

We have $j\le m-1$.
Then $k=m-j\ge1$.
Then $k\in\N$.
Since $k\in K_j$, we get: $\forall^\circ p\in M$, $\dim\Lg_p^{(j)}\le k$.
The preimage, under~$\pi_j:M_j\to M$, of~a~dense open set is dense open.
Therefore, by~Claim~2, $\forall^\circ q\in M_j$, $\dim\Lg_q\le k$.
Since $\pi_j:M_j\to M$ is open and $G$-equivariant,
and since the $G^\circ$-action on $M$ is fixpoint rare,
we see that the $G^\circ$-action on $M_j$ is fixpoint rare as well.
Then, by \lref{lem-first-order-stab-small} (with $M$ replaced by~$M_j$),
we have: $\forall^\circ q\in M_j$, $\dim\Lg'_q\le k-1$.
The image, under~$\pi_j:M_j\to M$, of a dense open set is dense open.
Therefore, by~Claim~3, $\forall^\circ p\in M$, $\dim\Lg_p^{(j+1)}\le k-1$.
Then $k-1\in K_{j+1}$, as desired.
\end{proof}

We cannot replace ``$F^{n-1}M$'' by ``$F^{n-2}M$'' in \tref{thm-high-stabs}:

\vskip.1in\noindent
{\it Example:} (Cf.~Example 4.3 in \cite{olver:movfrmsing}.)
Let $n\ge2$ be an integer.
Let $G$~be the additive Lie group~$\R^n$.
Let $M$ be the manifold $\R^2$.
Let $G$ act on~$M$ by:
$(s_1,\ldots,s_n)\,(x,y)=(x,y+s_1x+s_2x^2+\cdots+s_nx^n)$.
Since every nonzero polynomial has only finitely many roots,
we conclude, for all~$s\in G\backslash\{0_n\}$, that there exists a finite set $A\subseteq\R$
such that $\Fix_M(s)=A\times\R$.
Thus, the $G$-action on $M$ is fixpoint rare.
It is, moreover, $C^\infty$,
and, therefore, induces a $G$-action on $F^{n-2}M$.
We wish to prove: For any dense open $G$-invariant subset $Q$ of $F^{n-2}M$,
there exists $q\in Q$ such that $\Stab_G(q)$ is not discrete in $G$.
We will, in fact, show: $\forall q\in F^{n-2}M$, $\Stab_G(q)$ is not discrete in $G$.
Let $q_0\in F^{n-2}M$ be given.
We wish to prove: $\Stab_G(q_0)$ is not discrete in~$G$.

For any $\phi\in C^\infty(\R,\R)$,
for all integers $i\ge0$,
let $\phi^{(i)}:\R\to\R$ denote the $i$th derivative of $\phi$.
For all~integers $j\in[1,n]$, define $f_j:\R\to\R$ by $f_j(x)=x^j$.
Let $p_0:=\pi_M^{n-2}(q_0)$.
Then $p_0\in M=\R^2$.
Let $x_0,y_0\in\R$ satisfy $p_0=(x_0,y_0)$.
For all integers $i\in[1,n-1]$, define $T_i:\R^n\to\R$ by
$\displaystyle{T_i(s_1,\ldots,s_n)=\sum_{j=1}^ns_j[f_j^{(i-1)}(x_0)]}$;
then $T_i:\R^n\to\R$ is linear.
Define $T:\R^n\to\R^{n-1}$ by $T(s)=(T_1(s),\ldots,T_{n-1}(s))$;
then $T:\R^n\to\R^{n-1}$ is a linear transformation.
Because $\dim(\R^n)>\dim(\R^{n-1})$,
we conclude that $\dim(\ker T)>0$,
and, therefore, that $\ker T$ is not discrete in $\R^n$.
So, since $G=\R^n$, it suffices to~show that $\ker T\subseteq\Stab_G(q_0)$.
Let $s\in\ker T$ be given.
We wish to show that $sq_0=q_0$.

Define $\sigma:M\to M$ by $\sigma(p)=sp$.
Let $I:=\id_M:M\to M$.
Then $(F^{n-2}\sigma)(q_0)=sq_0$ and $(F^{n-2}I)(q_0)=q_0$,
so we wish to show: $(F^{n-2}\sigma)(q_0)=(F^{n-2}I)(q_0)$.
In other words, we wish to prove that $F^{n-2}\sigma\sim F^{n-2}I\,\,[[0,q_0]]$.
By $\Rightarrow$ of~\lref{lem-lose-j-derivs-from-ell}
(with $i$ replaced by~$0$ and $j$ by $n-2$),
it suffices to show: $\sigma\sim I\,\,[[n-2,p_0]]$.
Let $\alpha\in\N_0^2$ be given, and assume $|\alpha|\le n-2$.
We wish to show: $(\partial^\alpha\sigma)(p_0)=(\partial^\alpha I)(p_0)$.

Let $a,b\in\N_0$ satisfy $\alpha=(a,b)$.
Then $a+b=|\alpha|\le n-2$.
Then $a\in\{0,\ldots,n-2\}$.
We wish to show: $(\partial^{(a,b)}\sigma)(x_0,y_0)=(\partial^{(a,b)}I)(x_0,y_0)$.

As $s\in\ker T$, $T(s)=0_{n-1}$.
Then $(T_1(s),\ldots,T_{n-1}(s))=0_{n-1}$.
So, since $a\in\{0,\ldots,n-2\}$, we get $T_{a+1}(s)=0$.
Let $s_1,\ldots,s_n\in\R$ satisfy $s=(s_1,\ldots,s_n)$.
Let $\displaystyle{f:=\sum_{j=1}^n\,s_jf_j}$.
Then, for all~$x\in\R$, we have $f(x)=s_1x+s_2x^2+\cdots+s_nx^n$.
Therefore, for all $x,y\in\R$, we have
\begin{eqnarray*}
\sigma(x,y)&=&(s_1,\ldots,s_n)(x,y)\,\,=\,\,(x,y+[f(x)])\\
&=&(x,y)+(0,f(x))\,\,=\,\,[I(x,y)]+(0,f(x)).
\end{eqnarray*}
Then $(\partial^{(a,b)}\sigma)(x_0,y_0)=[(\partial^{(a,b)}I)(x_0,y_0)]+(0,f^{(a)}(x_0))$.
So, since
$$f^{(a)}(x_0)\quad=\quad\sum_{j=1}^n\,s_j[f_j^{(a)}(x_0)]\quad=\quad T_{a+1}(s)\quad=\quad0,$$
we get $(\partial^{(a,b)}\sigma)(x_0,y_0)=(\partial^{(a,b)}I)(x_0,y_0)$, as desired.
{\it End of example.}


\bibliography{list}

\end{document}